\documentclass[reqno,hidelinks]{amsart}

\usepackage{a4,amsmath,amssymb,amsfonts,amsthm}
\usepackage{xfrac,array}
\usepackage{tikz,slashed,graphicx}
\usetikzlibrary{snakes,arrows}
\usepackage{latexsym}
\usepackage{mathrsfs}
\usepackage[numbers]{natbib}
\usepackage{hyperref,euscript}
\usepackage{verbatim}

\usepackage{amsaddr} 
\usepackage{color}
\usepackage{csquotes}

\newcommand{\dt}{\partial_t}
\newcommand{\dtau}{\partial_{\tau}}
\newcommand{\di}{\mathrm{d}} 

\newcommand{\M}{\mathcal{M}}
\newcommand{\G}{\mathcal{G}} 
\newcommand{\D}{\mathcal{D}}
\newcommand{\Db}{\mathbb{D}}

\newcommand \be {\begin{equation}}
\newcommand \ee {\end{equation}}

\newcommand \ben {\begin{equation*}}
\newcommand \een {\end{equation*}}

\newtheorem{theorem}{Theorem}[section]
\newtheorem{lemma}[theorem]{Lemma}
\newtheorem{proposition}[theorem]{Proposition}
\newtheorem{corollary}[theorem]{Corollary}

\newtheorem{remark}[theorem]{Remark}

\numberwithin{equation}{section}

\allowdisplaybreaks[4]

\begin{document}
\title[Nonlinear wave in supersymmetric]{Global future existence for a nonlinear wave equation in a $(1+3+n)$-dimensional cosmological supersymmetric spacetime}

\author[H. Chen]{Haiping Chen} 
\email{chenhp@stu.xmu.edu.cn}
\address{School of Mathematical Sciences, Xiamen University,\\
	Xiamen 361005, China}

\author[J. Wang]{Jinhua Wang} 
\email{wangjinhua@xmu.edu.cn}
\address{School of Mathematical Sciences, Xiamen University,\\
	Xiamen 361005, China}

\keywords{Kaluza-Klein spacetime, cosmological spacetime; wave equation; global existence; MSC2020: 83E15, 83F05, 35Q75, 35A01} 

\begin{abstract}
This paper is concerned with the Cauchy problem for a semilinear wave equation with arbitrary quadratic nonlinearities, posed on a cosmological supersymmetric spacetime of the product form 
(Open Milne) $\times K$, where $K$ is a compact, Ricci‑flat $n$-dimensional Riemannian manifold. We establish the global future existence of solutions for small initial data. Our proof, based on a novel modified energy, avoids spectral decompositions of the Laplace operator on the internal manifold and does not require any special nonlinear coupling structure. This robust framework is expected to be readily extendable to tensorial wave equations, including the Einstein equations.

\end{abstract}
\maketitle

\section{Introduction}\label{sec-intro}
The original Kaluza-Klein framework provides a geometric approach to unifying gravity and electromagnetism by formulating general relativity on a higher-dimensional spacetime \cite{Kaluza, Klein}. In this setting, the total spacetime is taken as a product of a $(1+3)$-dimensional macroscopic spacetime and a compact $q$-dimensional Riemannian internal manifold.

The simplest example of a Kaluza–Klein spacetime is the product manifold $(\mathbb{R}^{1+3} \times S^1, \eta_{\mathbb{R}^{1+3}} + d\theta^2)$. Its classical stability was first analyzed by Witten in 1982 \cite{Witten-KK}. More recently, Wyatt \cite{Wyatt-17} established the nonlinear stability of the flat Kaluza–Klein spacetime $(\mathbb{R}^{1+3} \times \mathbb{T}^q, \eta_{\mathbb{R}^{1+3}} + g_{\text{flat},\mathbb{T}^q})$ for perturbations depending only on the non-compact coordinates, in which case the $(1+3+q)$-dim vacuum Einstein equations reduce to the $1+3$-dimensional Einstein-Maxwell massless scalar system. For fully general perturbations, Andersson et al. \cite{Andersson-KK-20} proved the stability of spacetimes of the form $\mathbb{R}^{1+n} \times M$, where $M$ is a compact, Ricci‑flat Riemannian manifold admitting a nonzero parallel spinor in dimensions $n \ge 9$. These stability analyzes of Kaluza-Klein spacetimes with a Minkowski macroscopic part rely crucially on the nonlinear stability of the classical $1+3$-dimensional vacuum Einstein equations \cite{christodoulou1993global,Han-Igo-05, Han-Igo-10}.

In contrast, the \emph{Milne model} constitutes another stable solution to the Einstein equations with vanishing cosmological constant;  it carries negatively curved spatial slices and belongs to the $\kappa=-1$ Friedmann-Robertson-Walker (FRW)  cosmological spacetimes. By passing to a quotient of the Milne spacetime, one obtains the \emph{closed Milne model}, whose Cauchy surfaces are isometric to a $3$-dimensional closed hyperbolic manifold. We shall refer to the standard Milne model as the \emph{open Milne model} to distinguish it from its closed counterpart. The stability of the closed Milne model was addressed in \cite{A-M-11-cmc}, where, in fact, the stability of the broader class of $n$-dimensional negative Einstein spaces was established under the Einstein flow. By analogy, Branding, Fajman and Kr\"{o}ncke \cite{Branding-Fajman-Kroencke-18} proved the future stability for the Kaluza-Klein spacetime with the macroscopic spacetime replaced by the closed Milne model, subject to $\mathbb{T}^q$-invariant perturbations. The stability of the open Milne spacetime was proved by Wang and Yuan \cite{W-Y-open-M}, and the higher dimensional counterpart was subsequently treated by Wang \cite{Wang-24}.

When the internal space is $S^1$ or $\mathbb{T}^q$, for which spectral theory is well understood, the results of \cite{Wyatt-17, Branding-Fajman-Kroencke-18} can be characterized as nonlinear stability with zero-mode perturbations. On the other hand, based on mode decompositions, Wang \cite{wang2021nonlinear} established  a global existence for the Cauchy problem of a semilinear wave equation with arbitrary quadratic nonlinearities in the cosmological Kaluza–Klein spacetime $(\text{closed Milne}) \times S^1$, where both zero and nonzero modes are involved. In particular, after mode decomposition \cite{Pope-KK}, the wave equation studied in \cite{wang2021nonlinear} is essentially decomposed into a massless wave coupled with a massive wave (governed by a Klein-Gordon equation), featuring a specific nonlinear coupling structure.

In the present paper, we consider a semilinear wave equation with arbitrary quadratic nonlinearities, posed on the cosmological supersymmetric spacetime  $(\text{Open Milne}) \times K$, where $K$ is a compact, Ricci-flat $n$-dimensional Riemannian manifold. 
Our proof does not rely on mode decompositions, nor on any particular nonlinear coupling structure. This independence is expected to greatly facilitate subsequent extensions to tensorial wave equations, including the Einstein equations.

\subsection{Main results}
Let $\bar M$ be a $(1+3)$-manifold of the form $(0, +\infty) \times \Sigma$, where $\Sigma$ is a non-compact $3$-manifold admitting a hyperbolic metric $\gamma$ with sectional curvature $-1$. Then spacetime $(\bar M, \bar\gamma)$ with $$\bar\gamma = -dt^2 + t^2 \gamma$$ is a solution to the vacuum Einstein equations and is known as the \emph{Milne model}. 
Denote $M=\Sigma \times K $, where $(K,k)$ is a compact, Ricci-flat Riemannian manifold with dimension $n$. The $(1+3+n)$-dimensional cosmological supersymmetric spacetime $(\M = (0, +\infty) \times M, \G)$ with the metric 
\ben
\G = -\di t^2 + t^2 \gamma + k 
\een
is globally hyperbolic and Ricci flat, i.e., satisfying the vacuum Einstein equations. We consider on $(\M, \G)$ the following Cauchy problem of a semi-linear wave equation
\begin{align}
\Box_{\G} \phi  &= S(\D \phi, \D \phi), \label{wave-eq} \\
\phi|_{t=t_0}& = \varepsilon \varphi_0,  \quad \dt \phi|_{t = t_0} = \varepsilon \varphi_1, \label{wave-data}
\end{align}
where $\Box_{\G}$ is the geometric wave operator of $\G$, $\D$ is the connection associated to $\G$ and $S(\D \phi, \D \phi)$ is an arbitrary quadratic form depending on the derivative $\D\phi$.

\begin{theorem}\label{main-thm-1}
Let us consider the Cauchy problem \eqref{wave-eq}-\eqref{wave-data} with initial data $(\varphi _{0},\varphi _{1}  )\in H^{N+1} \times H^{N}$. If $\varepsilon$ is small enough, a globally unique solution $\phi$ exists on $[t_0, + \infty) \times M$, with some $t_0 > 0$. And the solution $\phi$ admits the following estimates:
\begin{align*}
\left \| \phi  \right \| _{H^{N}} ^{2} + \left \| \mathbb{D} \phi  \right \| _{H^{N}} ^{2} 
\lesssim \varepsilon ^{2} t^{-1} , \quad  t \in [t_0, + \infty).
\end{align*}
Here $N \in \mathbb{N}$ and $N\ge3+2\tilde{n}  $, where $\tilde{n} $ is the smallest integer larger  than or equal to $ \frac{n}{2}$. 
\end{theorem}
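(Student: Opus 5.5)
We will prove Theorem \ref{main-thm-1} by a continuity (bootstrap) argument built on a modified energy adapted to the product structure. By standard local well-posedness in $H^{N+1}\times H^N$, it suffices to show the following. Suppose that on $[t_0,T)$ we have $E_N(t)\le 4C_0\varepsilon^2 t^{-1}$, where $E_N$ is a suitable energy at order $N$. Then in fact $E_N(t)\le 2C_0\varepsilon^2 t^{-1}$ on the same interval, provided $\varepsilon$ is small.

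\textbf{The energy.} We write the wave operator in the coordinates $(t,x,y)\in(0,\infty)\times\Sigma\times K$:
\begin{equation*}
\Box_{\G}\phi=-\dt^2\phi-\frac{3}{t}\dt\phi+t^{-2}\Delta_\gamma\phi+\Delta_k\phi .
\end{equation*}
We commute with $\nabla_\gamma$-derivatives and $\nabla_k$-derivatives, or equivalently with powers of $\Delta_\gamma$ and $\Delta_k$. Since $K$ is Ricci-flat and $\gamma$ is hyperbolic, these commutators produce only lower-order curvature terms with the right signs or sizes.

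The naive energy $\int (\dt\phi)^2+t^{-2}|\nabla_\gamma\phi|^2+|\nabla_k\phi|^2$ decays only partially: the $\tfrac3t\dt\phi$ friction damps $\dt\phi$, but the $|\nabla_k\phi|^2$ part behaves like a Klein--Gordon energy, which is not damped at rate $t^{-1}$ by itself. Note also that the stated rate $t^{-1}$ refers to the unweighted norms $\|\phi\|_{H^N}$. We therefore rescale $u=t^{3/2}\phi$ (or work directly with weighted terms) and add a cross term. Concretely, we define
\begin{equation*}
\mathcal E(t)=t\int_M\Big((\dt\phi)^2+t^{-2}|\nabla_\gamma\phi|^2+|\nabla_k\phi|^2+\frac{c}{t}\phi\,\dt\phi+\frac{c'}{t^2}\phi^2\Big)\,d\mu_\gamma d\mu_k ,
\end{equation*}
with constants $c,c'$ chosen so that two things hold. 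First, $\mathcal E$ is equivalent to $t\big(\|\dt\phi\|^2+t^{-2}\|\nabla_\gamma\phi\|^2+\|\nabla_k\phi\|^2+t^{-2}\|\phi\|^2\big)$. Second, $\frac{d}{dt}\mathcal E\le$ (nonlinear terms), i.e. the linear part is non-increasing. The cross term $\phi\dt\phi$ is the standard device that transfers damping from $\dt\phi$ to the mass-like $|\nabla_k\phi|^2$ term without using any spectral decomposition of $\Delta_k$.

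The control of $\|\phi\|$ itself, including the zero mode, comes from the hyperbolic Poincaré-type inequality on $\Sigma$, namely $\|\nabla_\gamma f\|^2\ge c\|f\|^2$ with $c=1$ for $\mathbb H^3$. Combining $t^{-2}$-weighted $\gamma$-gradients with the volume factor $t^3$ of $d\mu_{\bar\gamma}$ gives the $t^{-1}$ rate for unweighted norms. I will fix these weights by computing $\frac{d}{dt}$ of each piece and matching coefficients.

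\textbf{Nonlinear estimate.} The commuted nonlinearity $\partial^{\le N}S(\D\phi,\D\phi)$ is estimated by Moser-type product estimates on $\Sigma\times K$. The lower-order factor is placed in $L^\infty$ via Sobolev embedding on the $(3+n)$-dimensional $M$, which needs $\tfrac{3}{2}+\tilde n$ derivatives. Doing this uniformly in both directions is why we assume $N\ge 3+2\tilde n$. On $\Sigma$ we use Sobolev inequalities on hyperbolic space with bounded geometry. Since $\D\phi$ includes $t^{-1}\nabla_\gamma$ in the orthonormal frame, we obtain
\begin{equation*}
\frac{d}{dt}\mathcal E_N\lesssim t^{-a}\mathcal E_N^{1/2}\cdot \mathcal E_N \quad\text{with } a>1
\end{equation*}
after inserting the bootstrap decay $\|\D\phi\|_{L^\infty}\lesssim\varepsilon t^{-1/2}$ together with the extra $t^{-1/2}$ from the weight. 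Integrating gives $\mathcal E_N(t)\le \mathcal E_N(t_0)+C\varepsilon^3$, which closes the bootstrap.

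\textbf{Main obstacle.} The main obstacle is the linear step: constructing the modified energy so that both the massless directions ($\Sigma$) and the massive-like directions ($K$-derivatives) decay at the integrable rate, without mode decomposition. In particular, the cross-term coefficients must produce a nonnegative bulk term when $\Delta_k$ and $t^{-2}\Delta_\gamma$ act simultaneously. If a single choice fails, I would first try a time-dependent coefficient $c(t)$, and then use the Poincaré inequality on $\Sigma$ to absorb the bad terms.
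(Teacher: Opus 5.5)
The strategy matches the paper's: a bootstrap, a modified energy with $\phi\,\partial\phi$ and $\phi^2$ corrections instead of mode decomposition, the spectral gap $\|\nabla_\gamma f\|^2\ge\|f\|^2$ on $\Sigma$, commutation with $\Delta_\gamma^{l_1}\Delta_k^{l_2}$, and Gronwall. As written, however, it does not prove the theorem: the time weight is wrong, and the decisive linear step is left open.

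\textbf{The weight.} With the prefactor $t$, your $\mathcal E$ is comparable to $t^{-1}\big(\|\phi\|^2+\|\mathbb{D}\phi\|^2\big)$. Recall that $\mathbb{D}=\{\partial_\tau,\nabla_\gamma,t\nabla_k\}$ is weighted, so the theorem is not only about unweighted norms. Consequently:
\begin{itemize}
\item your conclusion $\mathcal E_N\le \mathcal E_N(t_0)+C\varepsilon^3$ only gives $\|\phi\|_{H^N}^2+\|\mathbb{D}\phi\|_{H^N}^2\lesssim\varepsilon^2 t$, not $\varepsilon^2t^{-1}$;
\item the nonlinear contribution is only $\lesssim t^{-1/2}\mathcal E_N^{3/2}$, so the claimed $a>1$ is not available.
\end{itemize}
That your monotonicity requirement can be met at this weight with room to spare (e.g.\ $c=2$, $c'=3$) shows it carries no decay information. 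The right prefactor is the full volume factor $t^3$. Then $\mathcal E=2tW$ with $W$ as below (for the paper's choice, $W=\widetilde E_{(1)}$), and $\partial_t\mathcal E=2(\partial_\tau W+W)$ with $\tau=\ln t$. The nonlinear term is bounded by $\|\mathbb{D}\phi\|_{L^\infty}W\lesssim\varepsilon t^{-3/2}\mathcal E$. Note that the bootstrap gives $\|\mathbb{D}\phi\|_{L^\infty}\lesssim\varepsilon t^{-1/2}$, i.e.\ $\|\mathcal{D}\phi\|_{L^\infty}\lesssim\varepsilon t^{-3/2}$, not $\varepsilon t^{-1/2}$.

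\textbf{The linear step.} At weight $t^3$ there is no slack, and this is where the real work lies. Set
\[
W=\tfrac12\int\big((\partial_\tau\phi)^2+|\nabla_\gamma\phi|^2+t^2|\nabla_k\phi|^2+c\,\phi\,\partial_\tau\phi+c'\phi^2\big).
\]
The rescaled equation gives
\begin{align*}
\partial_\tau W+W={}&\int\Big(\partial_\tau\phi+\frac c2\phi\Big)F+\Big(\frac c2-\frac32\Big)\int\big((\partial_\tau\phi)^2-t^2|\nabla_k\phi|^2\big)\\
&+\Big(\frac12-\frac c2\Big)\int|\nabla_\gamma\phi|^2+\Big(c'-\frac c2\Big)\int\phi\,\partial_\tau\phi+\frac{c'}{2}\int\phi^2 .
\end{align*}
The opposite signs in the second term force $c=3$. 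No $(\partial_\tau\phi)^2$ term is then left to absorb the cross term, which forces $c'=\frac32$. What remains is $-\int(|\nabla_\gamma\phi|^2-\frac34\phi^2)$, which is $\le 0$ precisely because $\lambda_0(-\Delta_\gamma)\ge 1>\frac34$.

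So your time-dependent $c(t)$ fallback is unnecessary, but there is also no damping margin: the $t^2|\nabla_k\phi|^2$ part is not damped, only not amplified. Positivity of $W$ also needs the Poincar\'e inequality, since $(\partial_\tau\phi)^2+3\phi\,\partial_\tau\phi+\frac32\phi^2$ alone is indefinite.

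At higher order the remainder is $(-1)^{l+1}\int(\Delta_\gamma\phi+\frac34\phi)\Delta_\gamma^{l_1}\Delta_k^{l_2}\phi$. Its sign needs $\|\Delta_\gamma u\|\ge\|\nabla_\gamma u\|$ and the Poincar\'e inequality applied to $\nabla_k\phi$, which the paper gets via a Kato inequality. For the same no-slack reason, curvature commutators from commuting with $\nabla_\gamma$ cannot be dismissed as harmless lower-order terms. The paper avoids them by commuting only with $\Delta_\gamma^{l_1}\Delta_k^{l_2}$, which commute exactly with the operator. It then recovers full Sobolev norms from elliptic estimates on $\Sigma$ and the Bochner identity $\int_K|\nabla_k^2\phi|^2=\int_K|\Delta_k\phi|^2$ on Ricci-flat $K$.
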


Before delving into the main idea of our proof, let us briefly recall the strategy used in \cite{wang2021nonlinear} for the case of a one‑dimensional internal space.

In that work, the wave equation \eqref{wave-eq} is decomposed into massive and massless modes. The massive modes decay as $t^{-1/2}$, whereas the massless modes exhibit much slower decay, owing to the fact that the spectrum of the Laplace operator in the closed hyperbolic space can be arbitrarily small. The proof in \cite{wang2021nonlinear} relies crucially on the specific nonlinear coupling structure between these two families of modes. More precisely, after mode decomposition,  the nonlinearity in the massive wave equation contains no quadratic terms involving only massless modes; such terms would be harmful, as their slow decay would destroy the energy estimates for the massive modes.

However, when one considers a higher dimensional internal manifold for which the spectral theory of the Laplace operator $\Delta$ is not fully understood, it becomes impossible to exploit the coupling structure as in \cite{wang2021nonlinear}. Replacing the macroscopic spacetime by the open Milne model, however, ensures that a massless wave decays as $t^{-1}$ \cite{W-Y-open-M}, since the spectrum of the Laplace operator on the noncompact hyperbolic space has a positive lower bound equal to one. Relative to the closed Milne case in \cite{wang2021nonlinear}, this improves the massless decay rate to $t^{-1}$, while the massive modes remain $t^{-1/2}$. Based on this observation, we design a novel modified energy functional and derive a closed energy inequality directly from \eqref{wave-eq}, without invoking mode decompositions or any special nonlinear coupling structure. In this robust framework, we inevitably lose a slight amount of decay for the massless mode, and consequently all modes end up with the uniform decay rate $t^{-1/2}$. 


\subsection{Outline of this paper}
The paper is organized as follows. In Section \ref{preliminaries}, we collect the necessary notation and preliminary material.  In Section \ref{sec-global}, we establish the energy estimates and subsequently prove the global existence theorem. 

\subsection{Acknowledgement} The authors are grateful to Lars Andersson for helpful discussions. J. Wang is supported by NSFC (No. 12271450).

\section{Preliminaries}\label{preliminaries}

\subsection{The rescaled equation}
The Milne metric $\bar\gamma$ is conformal to $- \di \tau^2 + \gamma$, i.e., $$\bar \gamma = t^2 (- \di \tau^2 + \gamma),$$ where the
logarithmic time $\tau$ is defined as
\be\label{def-tau}
\tau = \ln t.
\ee  
After rescaling, \eqref{wave-eq} is rewritten as
\ben
\Box \phi + t^{2} \Delta _{k} \phi= t^2 S(\D \phi, \D \phi),
\een
where $\Box$ is the Laplacian operator with respect to the conformal metric $- \di \tau^2 + \gamma$.
It can be further decomposed into the following $1+3+n$ form
\be\label{wave-rescale}
\dtau^2 \phi + 2 \dtau \phi - \Delta_\gamma \phi - t^{2} \Delta _{k} \phi   = F,
\ee
where the nonlinear term $F$ is given by
\be\label{def-F}
F = - S(t \D \phi, t\D \phi).
\ee
We note that  $\dtau = t \dt$. We use $\nabla_\gamma, \, \Delta_\gamma$ to denote the covariant derivative and the Laplacian operator associated with the metric $\gamma$,  and $\nabla _{k }$, $ \Delta_{k}$ to denote the ones corresponding to the metric $k$ on $K$.

\subsection{Conventions}\label{sec-notation}
\subsubsection{Derivatives}
In the following, we denote $M$ the spatial manifold \[M :=\Sigma \times K,  \] endowed with the product metric 
\be\label{def-g}
	g := \gamma + k,
\ee
where $\gamma$ and $k$ are metrics in $\Sigma$ and $K$, respectively.
 We use $\di \mu_g$ to denote the volume form with respect to $g$, and $\di \mu_{\gamma}$, $\di \mu_{k}$ correspond to the metrics $\gamma$ and $k$. 

Let us clarify the notation for the mixed covariant derivatives $\nabla_\gamma^p \nabla_k^q u$ and $\nabla_k^p \nabla_\gamma^q u$, where $u$ is a scalar function on $M$. Note that, $\nabla_\gamma$ and $\nabla_k$ are covariant derivatives associated to $(\Sigma, \gamma)$ and $(K, k)$ respectively. In local coordinates, we choose frames \[\left \{ \frac{\partial }{\partial x_{i} }, i=1,2,3 \right \} \quad \text{and} \quad \left \{ \frac{\partial }{\partial y_{A} } , A=1,\dots ,n \right \} \] on $(\Sigma,\gamma)$ and $(K,k)$ respectively. The corresponding Christoffel symbols are denoted by $\Gamma _{ij}^{m}$, $\bar{\Gamma } _{AB}^{C}$ respectively. 
For fixed $y \in K$, the quantity $\nabla_k^q u(\cdot, y)$ is a scalar function on $\Sigma$. We define $\nabla_\gamma^p  \nabla_{k}^q u$ by applying the $p$-fold covariant derivative with respect to $\gamma$ to this scalar function. For example, 
\begin{align*}
\nabla_i \nabla_A u :={}& \frac{\partial}{\partial x_i }  \left( \nabla_A u \right), \\
\nabla_i \nabla_j \nabla_A u :={}&  \frac{\partial^2}{\partial x_i\partial x_j} (\nabla_A u) -\Gamma _{ij}^{l}\frac{\partial }{\partial x_{l}} (\nabla_A u). 
\end{align*}
Higher-order expressions of the form $$\overbrace{\nabla_{i} \nabla_j \cdots}^p \overbrace{\nabla_A \nabla_B \cdots}^q u$$ are defined analogously and will be abbreviated as $\nabla_\gamma^p \nabla_k^q u$. Similarly, for fixed $x\in\Sigma$, $\nabla_\gamma^q u(x, \cdot)$ is a scalar function on $K$. We define $\nabla_k^p  \nabla_{\gamma}^q u$ by applying the $p$-fold covariant derivative with respect to $k$ to this scalar function. For instance,  
\begin{align*}
\nabla_A \nabla_i u :={}&  \frac{\partial}{\partial y_A} \left( \nabla_i u \right),\\
\nabla_A \nabla_B \nabla_i u :={}&  \frac{\partial^2}{\partial y_A\partial y_B} (\nabla_i u) -\bar\Gamma _{AB}^{C}\frac{\partial }{\partial y_{C}} (\nabla_i u).
\end{align*}
The higher-order case $$\overbrace{\nabla_{A} \nabla_B \cdots}^p \overbrace{\nabla_i \nabla_j \cdots}^q u$$ is defined in the same way and is denoted by $\nabla_k^p \nabla_\gamma^q u$.

If $D$ denotes the covariant derivative associated with the product metric $g$ \eqref{def-g}, then we have the identity \[ \overbrace{\nabla_{i} \nabla_j \cdots}^p \overbrace{\nabla_A \nabla_B \cdots}^q u = \overbrace{D_{i} D_j \cdots}^p \overbrace{D_A D_B \cdots}^q u. \]

We shall introduce the notation $\Db$ to denote $t \D$ for the sake of convenience, 
\be\label{def-Db}
\Db = \{\dtau, \nabla_\gamma, t\nabla_{k} \}.
\ee

\subsubsection{Sobolev norms} 
Let us define the pointwise norm
\begin{align*} \left|\nabla_\gamma^p \nabla_k^q u \right|^2 
= {}& \gamma^{i i^\prime} \gamma^{j j^\prime} \cdots k^{A A^\prime} k^{B B^\prime} \cdots \overbrace{\nabla_{i} \nabla_j \cdots}^p \overbrace{\nabla_A \nabla_B \cdots}^q u  \overbrace{\nabla_{i^\prime} \nabla_{j^\prime} \cdots}^p \overbrace{\nabla_{A^\prime} \nabla_{B^\prime} \cdots}^q u. 
\end{align*}
In particular,  
\begin{align*} 
\left|\nabla_\gamma^p  u \right|^2 = {}& \gamma^{i i^\prime} \gamma^{j j^\prime}   \cdots \overbrace{\nabla_{i} \nabla_j \cdots}^p  u  \overbrace{\nabla_{i^\prime} \nabla_{j^\prime} \cdots}^p  u, \\
\left| \nabla_k^q u \right|^2 = {}&   k^{A A^\prime} k^{B B^\prime} \cdots \overbrace{\nabla_A \nabla_B \cdots}^q u    \overbrace{\nabla_{A^\prime} \nabla_{B^\prime} \cdots}^q u. 
\end{align*}

 Fixing any $l\in \mathbb{N}$, we define the norms
\begin{align}
    \left \| u(\cdot ,x ,\cdot ) \right \| _{H^{l}(\Sigma ) }^{2} =
\sum_{0\le |j|\le l}^{} \int _{\Sigma } |\nabla_{\gamma }^{j}u|^{2}  \mathrm{d}  \mu _{\gamma },
\end{align}
\begin{align}
   \left \| u(\cdot ,\cdot  ,y ) \right \| _{H^{l}(K) }^{2} =
\sum_{0\le |j|\le l}^{} \int _{K } |\nabla_{k }^{j}u|^{2}  \mathrm{d}  \mu _{k }, 
\end{align}
\begin{align}
    \left \| u\right \| _{H^{l}(\Sigma \times K) }^{2} =
\sum_{0\le |p+q|\le l}^{} \int _{\Sigma \times K } |\nabla_\gamma^p\nabla_k^{q}u|^{2}  \mathrm{d}  \mu_{g }.
\end{align}
 Here $x$ and $y$ denote the coordinates on $\Sigma$ and $K$ respectively.

\subsection{Commuting identities}\label{sec-commute}

\begin{lemma}{\bf(Commutation between $\Delta_\gamma$ and $\Delta_k$)}\label{lem-com-1}
For any integers $l_1,l_2$ and any scalar function $u$, we have
\begin{align}\label{commute laplace}
[\Delta_\gamma^{l_1}, \Delta_k^{l_2}] u = 0.
\end{align}
\end{lemma}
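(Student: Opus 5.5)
The plan is to work in local product coordinates $(x,y)$ on $M=\Sigma\times K$, with $x=(x_1,x_2,x_3)$ on $\Sigma$ and $y=(y_1,\dots,y_n)$ on $K$. In these coordinates the Laplacians act on scalar functions as
\begin{align*}
\Delta_\gamma u &= \gamma^{ij}(x)\Big(\frac{\partial^2 u}{\partial x_i\partial x_j}-\Gamma^m_{ij}(x)\frac{\partial u}{\partial x_m}\Big),\\
\Delta_k u &= k^{AB}(y)\Big(\frac{\partial^2 u}{\partial y_A\partial y_B}-\bar\Gamma^C_{AB}(y)\frac{\partial u}{\partial y_C}\Big).
\end{align*}
The metric product structure $g=\gamma+k$ is what makes this decoupling hold. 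The coefficients $\gamma^{ij}$ and $\Gamma^m_{ij}$ depend only on $x$, while $k^{AB}$ and $\bar\Gamma^C_{AB}$ depend only on $y$.

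First I prove the base case $l_1=l_2=1$. I expand $\Delta_\gamma\Delta_k u$ term by term. Each term of $\Delta_\gamma$ is a $y$-independent coefficient times an $x$-derivative, so it passes through the coefficients $k^{AB}(y)$ and $\bar\Gamma^C_{AB}(y)$. Likewise each term of $\Delta_k$ passes through the $x$-dependent coefficients. Since partial derivatives commute for smooth $u$, both orderings give the same expression:
\[
\gamma^{ij}k^{AB}\big(\partial_{x_i}\partial_{x_j}-\Gamma^m_{ij}\partial_{x_m}\big)\big(\partial_{y_A}\partial_{y_B}-\bar\Gamma^C_{AB}\partial_{y_C}\big)u .
\]
Hence $[\Delta_\gamma,\Delta_k]u=0$. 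The same computation can be read in the language of the mixed derivatives $\nabla_\gamma^p\nabla_k^q u$ defined in Section~\ref{sec-notation}. It shows that $\nabla_\gamma$ applied to $\nabla_k u$ coincides with $\nabla_k$ applied to $\nabla_\gamma u$ componentwise, which is consistent with the identity with $D$ stated there.

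Second, I extend to general powers by induction. If $\Delta_\gamma$ commutes with $\Delta_k$, then $\Delta_\gamma$ commutes with $\Delta_k^{l_2}$ by moving it through one factor at a time. A second induction on $l_1$ then gives $[\Delta_\gamma^{l_1},\Delta_k^{l_2}]=0$. The cases $l_1=0$ or $l_2=0$ are trivial.

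The only real care point is regularity. The statement should be read for $u$ smooth enough that all derivatives appearing are defined, or else extended by density in the $H^l(\Sigma\times K)$ norms. I see no genuine obstacle beyond this; the key input is simply the coordinate-wise separation of the coefficients.
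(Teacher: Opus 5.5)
Your proposal is correct and follows essentially the same route as the paper. You write both Laplacians in local product coordinates, use that the coefficients of $\Delta_\gamma$ depend only on $x$ and those of $\Delta_k$ only on $y$, invoke the commutation of coordinate partial derivatives to settle $l_1=l_2=1$, and iterate to general powers; the explicit double induction and the regularity remark are details the paper leaves implicit.
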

\begin{proof}
It suffices to prove the case $l_1=l_2=1$, since the general case follows by iteration. Recall that
\begin{align*}
\Delta_\gamma u &= \gamma^{ij}\left(\frac{\partial^2 u}{\partial x^i\partial x^j} - \Gamma_{ij}^s \frac{\partial u}{\partial x^s}\right),\\
\Delta_k u &= k^{AB}\left(\frac{\partial^2 u}{\partial y^A\partial y^B} - \bar{\Gamma}_{AB}^C \frac{\partial u}{\partial y^C}\right).
\end{align*}
Because the coordinate vector fields $\partial_{x^i}$ and $\partial_{y^A}$ commute, and the Christoffel symbols $\Gamma_{ij}^s$ (respectively $\bar{\Gamma}_{AB}^C$) are independent of the $y$-coordinates (respectively $x$-coordinates), we immediately obtain $\Delta_\gamma\Delta_k u - \Delta_k\Delta_\gamma u = 0$.
\end{proof}

\begin{lemma}{\bf(Commutation between $\Delta_\gamma$ and $\nabla_k$)}\label{lem-com-2}
For any integer $l$ and any scalar function $u$, we have
\begin{align}\label{commute laplace derivative}
[\Delta_\gamma, \nabla_k^l] u = 0.
\end{align}
\end{lemma}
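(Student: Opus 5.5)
The plan is to argue directly in local product coordinates, as in the proof of Lemma \ref{lem-com-1}, and to proceed by induction on $l$.

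\textbf{Case $l=1$.} Here $\nabla_k u$ has components $\nabla_A u = \partial_{y^A} u$. By the convention of Section \ref{sec-notation}, $\Delta_\gamma \nabla_A u$ is the $\gamma$-Laplacian of the scalar function $x\mapsto \nabla_A u(x,y)$ for each fixed $y$ and fixed index $A$:
\[
\Delta_\gamma \nabla_A u = \gamma^{ij}\Big(\partial_{x^i}\partial_{x^j}\partial_{y^A} u - \Gamma^s_{ij}\partial_{x^s}\partial_{y^A} u\Big).
\]
Conversely, $\nabla_A \Delta_\gamma u = \partial_{y^A}\big(\gamma^{ij}(\partial_{x^i}\partial_{x^j}u - \Gamma^s_{ij}\partial_{x^s}u)\big)$. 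The coefficients $\gamma^{ij}$ and $\Gamma^s_{ij}$ depend only on $x$, and coordinate partial derivatives commute. Hence the two expressions agree.

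\textbf{Inductive step.} Suppose $[\Delta_\gamma,\nabla_k^{l-1}]u=0$ holds for all scalar functions $u$. Write $\nabla_k^l u$ in components:
\[
\nabla_{A_1}\cdots\nabla_{A_l}u = \partial_{y^{A_1}}\big(\nabla_{A_2}\cdots\nabla_{A_l}u\big) - \sum \bar\Gamma^{C}_{A_1A_j}\,\nabla_{A_2}\cdots\nabla_{C}\cdots\nabla_{A_l}u.
\]
Here the sum runs over the slot $j$ that gets contracted. This is a linear combination of $\partial_y$-derivatives of components of $\nabla_k^{l-1}u$, with coefficients $\bar\Gamma$ that depend only on $y$.

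The operator $\Delta_\gamma$ acts componentwise, for fixed $y$ and fixed indices. It is a differential operator in $x$ alone, with coefficients depending on $x$ alone. Therefore it commutes with both $\partial_{y^{A}}$ and multiplication by functions of $y$. Applying the induction hypothesis to each component of $\nabla_k^{l-1}u$ then gives
\[
\Delta_\gamma\nabla_k^l u=\nabla_k^l\Delta_\gamma u.
\]

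\textbf{Main obstacle.} The only step needing care is being precise about what $\Delta_\gamma$ means when applied to the $K$-tensor $\nabla_k^l u$. By the stated convention, $\nabla_\gamma$ acts on each $y$-indexed component as on a scalar on $\Sigma$. This is consistent with the product connection $D$: since $g=\gamma+k$ is a product metric, the mixed Christoffel symbols of $D$ vanish, so $D_i$ acting on $K$-indices introduces no $\bar\Gamma$ terms.

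With this convention fixed, everything reduces to the fact that the coefficients on the two factors depend on disjoint sets of variables. No curvature terms can arise, because $\Sigma$ and $K$ contribute nothing to each other's connection.
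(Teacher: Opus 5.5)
Your proof is correct and is essentially the paper's argument: a direct computation in product coordinates, using that $\gamma^{ij},\Gamma^s_{ij}$ depend only on $x$ while $\bar\Gamma^C_{AB}$ depend only on $y$. The paper writes out $l=1$ and $l=2$ and then appeals to repetition, whereas your induction, via the component formula for $\nabla_k$ of a $(0,l-1)$-tensor, makes that last step explicit.
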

\begin{proof}
For $l=1$, we compute
\begin{align*}
\nabla_A \Delta_\gamma u
&= \nabla_A \left(\gamma^{ij} \frac{\partial^2 u}{\partial x^i\partial x^j} - \gamma^{ij}\Gamma_{ij}^s \frac{\partial u}{\partial x^s}\right) \\
&= \frac{\partial}{\partial y^A}\left(\gamma^{ij} \frac{\partial^2 u}{\partial x^i\partial x^j} - \gamma^{ij}\Gamma_{ij}^s \frac{\partial u}{\partial x^s}\right) \\
&= \gamma^{ij}\left(\frac{\partial^2}{\partial x^i\partial x^j} - \Gamma_{ij}^s \frac{\partial}{\partial x^s}\right)\frac{\partial u}{\partial y^A} \\
&= \Delta_\gamma\left(\frac{\partial u}{\partial y^A}\right) = \Delta_\gamma \nabla_A u.
\end{align*}
For $l=2$, using the definition of the second covariant derivative $\nabla_A\nabla_B$,
\begin{align*}
\nabla_A \nabla_B \Delta_\gamma u
&= \left(\frac{\partial^2}{\partial y^A\partial y^B} - \bar{\Gamma}_{AB}^C \frac{\partial}{\partial y^C}\right)\Delta_\gamma u \\
&= \left(\frac{\partial^2}{\partial y^A\partial y^B} - \bar{\Gamma}_{AB}^C \frac{\partial}{\partial y^C}\right)
\left(\gamma^{ij} \frac{\partial^2 u}{\partial x^i\partial x^j} - \gamma^{ij}\Gamma_{ij}^s \frac{\partial u}{\partial x^s}\right) \\
&= \left(\gamma^{ij} \frac{\partial^2}{\partial x^i\partial x^j} - \gamma^{ij}\Gamma_{ij}^s \frac{\partial}{\partial x^s}\right)
\left(\frac{\partial^2}{\partial y^A\partial y^B} - \bar{\Gamma}_{AB}^C \frac{\partial}{\partial y^C}\right)u \\
&= \Delta_\gamma (\nabla_A\nabla_B u).
\end{align*}
The higher-order cases follow by repeated application of the same argument.
\end{proof}

\begin{lemma}{\bf(Commutation between $\mathcal{L}_{\partial_\tau}$ and the Laplacians)}\label{lem-com-3}
For any integers $l_1,l_2$ and any scalar function $u$, we have
\begin{align}\label{commute lie1}
\mathcal{L}_{\partial_\tau}\Delta_\gamma^{l_1} u = \Delta_\gamma^{l_1} \mathcal{L}_{\partial_\tau} u,
\end{align}
and
\begin{align}\label{commute lie2}
\mathcal{L}_{\partial_\tau}\Delta_k^{l_2} u = \Delta_k^{l_2} \mathcal{L}_{\partial_\tau} u.
\end{align}
\end{lemma}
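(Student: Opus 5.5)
We will argue exactly as in the proofs of Lemmas \ref{lem-com-1} and \ref{lem-com-2}, working in the product coordinates $(\tau, x^i, y^A)$ on $\mathbb{R}\times\Sigma\times K$. Since $u$ is a scalar function, $\mathcal{L}_{\partial_\tau} u = \partial_\tau u$. It therefore suffices to show that $\partial_\tau$ commutes with $\Delta_\gamma$ and with $\Delta_k$. The general powers $l_1, l_2$ then follow by induction: if $\partial_\tau \Delta_\gamma^{l_1-1} u = \Delta_\gamma^{l_1-1}\partial_\tau u$, then
\[
\partial_\tau \Delta_\gamma^{l_1} u = \Delta_\gamma\,\partial_\tau \Delta_\gamma^{l_1-1} u = \Delta_\gamma^{l_1}\partial_\tau u ,
\]
and the same argument works for $\Delta_k$.

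For the base case, write
\[
\Delta_\gamma u = \gamma^{ij}\left(\frac{\partial^2 u}{\partial x^i\partial x^j} - \Gamma_{ij}^s\frac{\partial u}{\partial x^s}\right).
\]
The key observation is that the coefficients $\gamma^{ij}$ and $\Gamma^s_{ij}$ are independent of $\tau$. Indeed, $\gamma$ is the fixed hyperbolic metric on $\Sigma$; the time dependence of the physical metric was removed by the conformal rescaling $\bar\gamma = t^2(-\di\tau^2+\gamma)$. Moreover, $\partial_\tau$ commutes with the coordinate derivatives $\partial_{x^i}$. Applying $\partial_\tau$ therefore passes through the coefficients and the coordinate derivatives, which gives $\partial_\tau\Delta_\gamma u = \Delta_\gamma\partial_\tau u$.

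The same argument applies to $\Delta_k$. The metric $k$ on $K$ is fixed and does not depend on $\tau$; in the rescaled equation \eqref{wave-rescale} the time dependence appears only in the explicit scalar factor $t^2 = e^{2\tau}$ multiplying $\Delta_k$, not inside $\Delta_k$. Hence $k^{AB}$ and $\bar\Gamma^C_{AB}$ are $\tau$-independent, and $\partial_\tau \Delta_k u = \Delta_k \partial_\tau u$.

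The only point requiring care, and hence the main obstacle, is to make sure that \eqref{commute lie2} concerns the operator $\Delta_k$ itself and not $t^2\Delta_k$. The latter does not commute: $\partial_\tau(t^2\Delta_k u) = 2t^2\Delta_k u + t^2\Delta_k\partial_\tau u$. Keeping track of this extra term will matter later in the energy estimates, but it does not affect the present lemma.
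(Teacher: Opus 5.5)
Your proof is correct and follows essentially the same route as the paper: you reduce to $l_1=l_2=1$ by induction, then let $\partial_\tau$ pass through the $\tau$-independent coefficients $\gamma^{ij},\Gamma^s_{ij}$ (resp.\ $k^{AB},\bar\Gamma^C_{AB}$) and the coordinate derivatives. The paper phrases the $\tau$-independence as $\mathcal{L}_{\partial_\tau}\gamma_{ij}=0$ implying $\mathcal{L}_{\partial_\tau}\Gamma^s_{ij}=0$, which is exactly the fact you use directly.
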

\begin{proof}
It is enough to prove the case $l_1=l_2=1$. In local coordinates on $(\Sigma,\gamma)$, the Christoffel symbols are given by
\[
\Gamma_{ij}^l = \frac12 \gamma^{ls}(\partial_i \gamma_{sj} + \partial_j \gamma_{si} - \partial_s \gamma_{ij}).
\]
Since $\mathcal{L}_{\partial_\tau}\gamma_{ij}=0$ (and hence also $\mathcal{L}_{\partial_\tau}\gamma^{ij}=0$), we have
\[
\mathcal{L}_{\partial_\tau}\Gamma_{ij}^s
= \frac12 \gamma^{sl}\left(\nabla_i \mathcal{L}_{\partial_\tau}\gamma_{jl}
+ \nabla_j \mathcal{L}_{\partial_\tau}\gamma_{il}
- \nabla_l \mathcal{L}_{\partial_\tau}\gamma_{ij}\right)=0,
\]
where $\nabla_i,\nabla_j$ denote the covariant derivatives with respect to $\gamma$. Consequently, a direct computation yields
\begin{align*}
\mathcal{L}_{\partial_\tau}\Delta_\gamma u
&= \gamma^{ij}\mathcal{L}_{\partial_\tau}(\nabla_i\nabla_j u) \\
&= \gamma^{ij}\mathcal{L}_{\partial_\tau}\left(\partial_i\partial_j u - \Gamma_{ij}^s \partial_s u\right) \\
&= \gamma^{ij}\left(\partial_i\partial_j(\partial_\tau u) - \Gamma_{ij}^s \partial_s(\partial_\tau u)\right)
- \gamma^{ij}(\partial_s u)\mathcal{L}_{\partial_\tau}\Gamma_{ij}^s \\
&= \Delta_\gamma(\mathcal{L}_{\partial_\tau}u).
\end{align*}
Applying the same argument to the metric $k$ on $K$ gives
\[
\mathcal{L}_{\partial_\tau}\Delta_k u = \Delta_k(\mathcal{L}_{\partial_\tau}u).
\]
The higher powers are then obtained by induction.
\end{proof}

\subsection{Kato-type inequality}
\begin{lemma}[Kato-type inequality]\label{lem-Kato}
On the product manifold $(M:=\Sigma\times K, \,g)$, the following Kato-type inequality holds:
\[ \bigl| \nabla_\gamma \nabla_k^p u \bigr|\geq \bigl|\nabla_\gamma |\nabla_k^p u| \bigr|. \]
\end{lemma}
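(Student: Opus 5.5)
The plan is to treat $T:=\nabla_k^p u$, for each fixed $x\in\Sigma$, as a tensor on $K$ whose components $T_{AB\cdots}(x,y)$ depend smoothly on the parameter $x$. By the convention of Section~\ref{sec-notation}, $\nabla_\gamma$ then acts on $T$ componentwise: $\nabla_i T_{AB\cdots}=\partial_{x^i}T_{AB\cdots}$. The pointwise norm $|\nabla_k^p u|^2 = k^{AA'}k^{BB'}\cdots T_{AB\cdots}T_{A'B'\cdots}$ is contracted only with $k$, and $k$ does not depend on $x$. So the argument reduces to the classical Kato inequality, with the role of the metric-compatible connection played by $\partial_{x^i}$, which annihilates $k$.

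The key steps are as follows. First, where $|T|>0$, differentiate $|T|^2$ in $x^i$. Since $\partial_{x^i}k^{AA'}=0$, this gives
\[
2|T|\,\nabla_i|T| = \nabla_i |T|^2 = 2\, k^{AA'}k^{BB'}\cdots (\nabla_i T_{AB\cdots})\, T_{A'B'\cdots}.
\]
Second, for each fixed $i$, apply Cauchy--Schwarz in the $k$-inner product on the $K$-indices. This yields $|\nabla_i |T||\le |\nabla_i T|_k$, where $|\cdot|_k$ contracts only the $K$-indices.

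Third, combine the $\Sigma$-direction. Let $v:=\nabla_\gamma|T|$ and, at a point with $v\neq0$, let $e=v/|v|_\gamma$. Then
\[
|v|_\gamma=e^i\nabla_i|T| = |T|^{-1}\langle e^i\nabla_i T, T\rangle_k \le |e^i\nabla_iT|_k\le |\nabla_\gamma T|,
\]
where the last step is Cauchy--Schwarz in the $\gamma$-index, using $|e|_\gamma=1$. Equivalently, one may choose $\gamma$-orthonormal coordinates at the point and bound $\sum_i \langle\nabla_iT,T\rangle^2/|T|^2\le\sum_i|\nabla_iT|^2$ term by term. This proves the inequality on $\{|T|>0\}$.

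The main obstacle is the set where $T=0$, on which $|T|$ may fail to be differentiable. I would handle it in the standard way. Set $f_\delta=\sqrt{|T|^2+\delta^2}$; the same computation gives $|\nabla_\gamma f_\delta|\le |\nabla_\gamma T|$ everywhere, since $|T|/f_\delta\le1$. Then let $\delta\to0$. This gives the inequality almost everywhere, in the weak or Lipschitz sense, which is all that is needed when the lemma is used inside integrals. On the interior of the zero set, both sides vanish.
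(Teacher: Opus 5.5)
Your proof is correct and is essentially the paper's argument: both use that $\nabla_\gamma$ acts componentwise on $\nabla_k^p u$ and that $k$ is independent of $x$ to get $\nabla_i|T|=\langle T,\nabla_i T\rangle_k/|T|$, and then apply Cauchy--Schwarz in the $k$-inner product. The paper finishes by showing $M_{ij}$ is positive semidefinite and tracing it against $\gamma^{ij}$, whereas you test against the unit direction of $\nabla_\gamma|T|$; your $\sqrt{|T|^2+\delta^2}$ regularization is a useful addition, since the paper divides by $|X|$ without addressing the zero set, and the integrated use of the lemma in Proposition~\ref{modified-energy-positive} needs exactly the a.e./weak-gradient version you supply.
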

\begin{remark}
 If we regard $\nabla_k^p u$ as a section of the pullback bundle $\pi^* T^{(0,p)}K$ over $\Sigma$, where $\pi: M\to K$ is the projection, then the above inequality is exactly the classical Kato inequality applied to this vector bundle.
\end{remark}
\begin{proof}
  Without loss of generality, we may assume $p=1$. Denote $X=\nabla_k u$, then  $\left | X   \right | =\sqrt{k^{AB}X_{A}X _{B}}$. We have 
\begin{align*}
    \nabla_{i} \left | X  \right | 
= \frac{ \langle X, \nabla_iX\rangle_k }{ \left | X  \right | },
\end{align*}
and hence 
\begin{align*}
   \left| \nabla_{i} \left | X   \right | \right|^2
= \gamma^{ij} \frac{ \langle X, \nabla_iX \rangle_k\langle X, \nabla_j X \rangle_k }{ \left | X \right |^2 }.
\end{align*}
On the other hand, 
\begin{align*}
     \left| \nabla_{\gamma} X \right|^2 ={}& \gamma^{ij} k^{AB} \nabla_i X_A \nabla_jX_B \\ 
={}& \gamma^{ij}\langle \nabla_i X, \nabla_j X \rangle_k.
\end{align*}
As a result, there is
\begin{align*}
   \left|\nabla_\gamma X \right|^2 - \left|\nabla_\gamma |X| \right|^2 ={}&  \gamma^{ij} \left( \langle \nabla_i X, \nabla_j X \rangle_k - \frac{ \langle X, \nabla_iX \rangle_k\langle X, \nabla_j X \rangle_k }{ \left | X \right |^2 } \right).
\end{align*}
For any fixed $X$, denote $$M_{ij} = \langle \nabla_i X, \nabla_j X \rangle_k - \frac{ \langle X, \nabla_iX \rangle_k\langle X, \nabla_j X \rangle_k }{ \left | X \right |^2 }.$$ Then  we have, for any vector field $V$, \[V^i V^j M_{ij} = \left|V^i \nabla_i X\right|^2_k - \frac{ \langle X, V^i \nabla_iX \rangle_k^2 }{ \left | X \right |^2 } \geq 0.\] That is, $M_{ij}$ is positive semi-definite. This lemma is concluded.
\end{proof}

\subsection{Elliptic estimates}   We recall the standard elliptic estimates \cite[ Appendix H, Theorem 27]{Besse-Einstein} about the even case, and the odd case can be derived by using the interpolation inequality. 

\begin{lemma}\label{lem-elliptic}
   Let $u$ be a scalar function defined on $M$ with compact support. For a non-compact manifold $(\Sigma,\gamma)$ and $m\in \mathbb{N} $, we have 
    \begin{align}\label{elliptic2}
        \left \| u  \right \|_{H^{2m}(\Sigma ) }\sim
 \left \| \Delta _{\gamma }^{m} u  \right \|_{L^{2}(\Sigma ) }  +\left \| u  \right \|_{L^{2}(\Sigma ) }.
    \end{align}
\end{lemma}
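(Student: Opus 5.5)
The plan is to prove \eqref{elliptic2} on the hyperbolic manifold $(\Sigma,\gamma)$ by induction on $m$, using the Bochner identity and the bounded geometry of $\gamma$ (constant sectional curvature $-1$, so $\mathrm{Ric}_\gamma=-2\gamma$ and all covariant derivatives of the curvature vanish). Since $u$ has compact support, all integrations by parts on $\Sigma$ are justified, with no boundary terms at infinity. The variable $y\in K$ is a parameter throughout: $\nabla_\gamma$ and $\Delta_\gamma$ act only in $x$, so it suffices to prove the estimate for each fixed $y$ with constants independent of $y$. If one also wants the version on $\Sigma\times K$, integrate in $y$ afterwards.

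\emph{The direction $\lesssim$ from left to right is trivial.} Indeed $|\Delta_\gamma^m u|\lesssim |\nabla_\gamma^{2m}u|$ pointwise, since the trace is bounded by the full tensor norm, so $\|\Delta_\gamma^m u\|_{L^2}+\|u\|_{L^2}\lesssim \|u\|_{H^{2m}(\Sigma)}$.

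\emph{Base case $m=1$.} Integrating the Bochner formula by parts gives
\[
\int_\Sigma |\nabla_\gamma^2 u|^2\,\mathrm{d}\mu_\gamma=\int_\Sigma (\Delta_\gamma u)^2\,\mathrm{d}\mu_\gamma-\int_\Sigma \mathrm{Ric}_\gamma(\nabla u,\nabla u)\,\mathrm{d}\mu_\gamma=\|\Delta_\gamma u\|_{L^2}^2+2\|\nabla_\gamma u\|_{L^2}^2 .
\]
The first-order term is handled by $\|\nabla_\gamma u\|_{L^2}^2=-\int u\,\Delta_\gamma u\le \|u\|_{L^2}\|\Delta_\gamma u\|_{L^2}$. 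Together these give $\|u\|_{H^2}\lesssim \|\Delta_\gamma u\|_{L^2}+\|u\|_{L^2}$.

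\emph{Induction step.} The main work is to establish the higher-order analogue
\[
\|\nabla_\gamma^{j+2}u\|_{L^2}\lesssim \|\nabla_\gamma^{j}\Delta_\gamma u\|_{L^2}+\sum_{i\le j+1}\|\nabla_\gamma^i u\|_{L^2}.
\]
To do this, commute covariant derivatives past the Laplacian. Since $\mathrm{Riem}_\gamma$ is parallel, the commutators $[\nabla^j_\gamma,\Delta_\gamma]u$ are linear combinations of contractions of $\mathrm{Riem}_\gamma$ with $\nabla_\gamma^{i}u$ for $i\le j$, and these carry uniform constants. Then apply the Bochner-type identity to the tensor $\nabla_\gamma^j u$: integrate $|\nabla\nabla^{j+1}u|^2$ by parts and swap indices, again paying only bounded curvature terms.

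Iterating this estimate, and interpolating the intermediate norms $\|\nabla^i_\gamma u\|_{L^2}$ between $\|u\|_{L^2}$ and the top order (by repeated integration by parts together with Cauchy--Schwarz and absorption with small constants), yields $\|u\|_{H^{2m}}\lesssim \|\Delta^{m}_\gamma u\|_{L^2}+\|u\|_{L^2}$. Concretely, I would apply the $j=2m-2$ estimate with $\Delta_\gamma u$ in place of $u$, use the induction hypothesis on $\Delta_\gamma u$, and then control $\|\Delta_\gamma u\|_{L^2}$ by interpolation between $\|u\|_{L^2}$ and $\|\Delta_\gamma^m u\|_{L^2}$. That last interpolation is a spectral/Hölder argument: $\Delta_\gamma$ is self-adjoint and nonpositive on compactly supported functions, so the spectral theorem gives $\|\Delta_\gamma u\|\le \|u\|^{1-1/m}\|\Delta_\gamma^m u\|^{1/m}$.

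\emph{Main obstacle.} The hard part is the bookkeeping of the commutator terms at high order and making sure every constant is uniform on the non-compact $\Sigma$. Here this is benign, because the curvature is constant and parallel, so every commutator has fixed bounded coefficients. Alternatively, one can simply cite the bounded-geometry elliptic estimate of \cite[Appendix H]{Besse-Einstein} for $\Delta_\gamma^m$, which is the route the paper indicates.
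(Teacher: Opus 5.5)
Your proof is correct, but it takes a different route from the paper. The paper gives no argument for this lemma. It cites the general elliptic estimate of \cite[Appendix H, Theorem 27]{Besse-Einstein} for the even case and mentions interpolation for odd orders, which the lemma as stated does not need.

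You instead prove the estimate directly:
\begin{itemize}
\item At $m=1$ you use the integrated Bochner identity $\|\nabla_\gamma^2u\|_{L^2}^2=\|\Delta_\gamma u\|_{L^2}^2+2\|\nabla_\gamma u\|_{L^2}^2$.
\item At higher order you use the commutator structure $[\Delta_\gamma,\nabla_\gamma^j]u=\sum_{i\le j}\mathrm{Riem}_\gamma*\nabla_\gamma^i u$. This has no derivatives of curvature, because $\gamma$ has constant curvature.
\end{itemize}
What your route buys is the point the citation passes over. A general elliptic estimate of that type is usually formulated on compact manifolds, or needs a bounded-geometry argument, whereas here $\Sigma$ is non-compact and the constants must be uniform. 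In your argument uniformity is automatic: $\mathrm{Riem}_\gamma$ is a fixed combination of products of $\gamma$, so every curvature term is bounded pointwise by a dimensional constant. Compact support also removes all boundary terms. The citation buys brevity and covers general elliptic operators.

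One bookkeeping correction to your final assembly: the $j=2m-2$ estimate should not be applied with $\Delta_\gamma u$ in place of $u$. Apply it, and its iterates down to $j=0$, to $u$ itself. This gives $\|u\|_{H^{2m}}\lesssim\|\Delta_\gamma u\|_{H^{2m-2}}+\|u\|_{H^1}$. Then:
\begin{itemize}
\item apply the induction hypothesis to $\Delta_\gamma u$, giving $\|\Delta_\gamma u\|_{H^{2m-2}}\lesssim\|\Delta_\gamma^m u\|_{L^2}+\|\Delta_\gamma u\|_{L^2}$;
\item bound $\|u\|_{H^1}$ using $\|\nabla_\gamma u\|_{L^2}^2\le\|u\|_{L^2}\|\Delta_\gamma u\|_{L^2}$;
\item control $\|\Delta_\gamma u\|_{L^2}$ by your interpolation.
\end{itemize}
That interpolation does not need the spectral theorem. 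Integrating by parts gives $\|\Delta_\gamma^k u\|_{L^2}^2\le\|\Delta_\gamma^{k-1}u\|_{L^2}\|\Delta_\gamma^{k+1}u\|_{L^2}$, and this log-convexity yields $\|\Delta_\gamma u\|_{L^2}\le\|u\|_{L^2}^{1-1/m}\|\Delta_\gamma^m u\|_{L^2}^{1/m}$ using only compact support.
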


\subsection{Some inequalities on $(\Sigma,\gamma)$}
Since $\Sigma $ is a non-compact, $3$-manifold with sectional curvature $-1$, we have \cite{Chavel-eigenvalues} 
\begin{equation}\label{eigenvalue}
    spec-\Delta _{\gamma } \subset [1,+  \infty ).
\end{equation}
Then it implies from Rayleigh's theorem
\begin{equation}\label{eigenvalue-ineq}
    \int _{\Sigma } |\nabla _{\gamma } \psi |^{2} \mathrm{d} \mu _{\gamma } \ge \int _{\Sigma } | \psi |^{2} \mathrm{d} \mu _{\gamma }
\end{equation}
for any $\psi \in H^{1} (\Sigma ) $.

To obtain the energy inequality, we also need the following estimate.
\begin{lemma}
    Suppose $u \in H^2(M)$,
    then we have
    \begin{align}\label{laplace and derivative estm}
        \int _{\Sigma }|\Delta _{\gamma } u |^{2} \mathrm{d} \mu _{\gamma } 
\ge \int _{\Sigma }|\nabla  _{\gamma } u |^{2} \mathrm{d} \mu _{\gamma } .
    \end{align}
This equality holds if and only if $u\equiv 0$.  
\end{lemma}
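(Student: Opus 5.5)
The plan is to reduce the inequality to the spectral bound \eqref{eigenvalue-ineq} by an integration by parts on $\Sigma$ (for fixed $y\in K$), followed by Cauchy--Schwarz. First, by density it suffices to treat $u(\cdot,y)$ smooth with compact support in $\Sigma$. Integrating by parts on $(\Sigma,\gamma)$ gives
\begin{align*}
\int_\Sigma |\nabla_\gamma u|^2 \,\mathrm{d}\mu_\gamma = -\int_\Sigma u\,\Delta_\gamma u \,\mathrm{d}\mu_\gamma \le \|u\|_{L^2(\Sigma)}\,\|\Delta_\gamma u\|_{L^2(\Sigma)} .
\end{align*}
Applying \eqref{eigenvalue-ineq} with $\psi=u$ yields $\|u\|_{L^2(\Sigma)}\le \|\nabla_\gamma u\|_{L^2(\Sigma)}$. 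Substituting this gives $\|\nabla_\gamma u\|^2_{L^2}\le \|\nabla_\gamma u\|_{L^2}\|\Delta_\gamma u\|_{L^2}$. Dividing by $\|\nabla_\gamma u\|_{L^2}$ when it is nonzero, and noting the inequality is trivial when it is zero, we get $\|\nabla_\gamma u\|_{L^2(\Sigma)}\le \|\Delta_\gamma u\|_{L^2(\Sigma)}$, which is \eqref{laplace and derivative estm}. An alternative route is through the spectral theorem: with the spectral measure of $-\Delta_\gamma$ supported in $[1,\infty)$ by \eqref{eigenvalue}, the two sides are $\int\lambda^2\,d\mu_u$ and $\int\lambda\,d\mu_u$, and $\lambda^2\ge\lambda$ on $[1,\infty)$. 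This route also makes the equality case transparent.

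For the equality statement, suppose equality holds. Then both inequalities used above are equalities, and in particular $\|u\|_{L^2}=\|\nabla_\gamma u\|_{L^2}$. In spectral terms, $\int(\lambda^2-\lambda)\,d\mu_u=0$ forces $\mu_u$ to be concentrated at $\lambda=1$, so $u$ would lie in the eigenspace $\ker(-\Delta_\gamma-1)$ in $L^2(\Sigma)$.

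The main obstacle is excluding such an $L^2$ eigenfunction. On a noncompact hyperbolic $3$-manifold, the bottom of the spectrum $1=(n-1)^2/4$ is not an $L^2$ eigenvalue; for $\mathbb{H}^3$ this follows from the explicit form of the resolvent and of the Plancherel measure. I would cite this standard fact, in the same spirit as \cite{Chavel-eigenvalues}, for the class of $\Sigma$ considered, which is essentially the open Milne case $\Sigma=\mathbb{H}^3$. For general geometrically infinite quotients the claim could fail, so the argument relies on that assumption. Granting it, equality forces $u\equiv 0$, and conversely $u\equiv 0$ clearly gives equality. Finally, the statement is meant for each fixed $y$; integrating over $K$ also gives the version on $M$ if needed.
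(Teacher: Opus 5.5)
Your proposal is correct and follows essentially the paper's argument. The inequality is proved the same way, by integration by parts, Cauchy--Schwarz and \eqref{eigenvalue-ineq}. For the equality case, both proofs reduce to $\Delta_\gamma u+u=0$ and then invoke the absence of an $L^2$ eigenfunction at the bottom of the spectrum.

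The only difference is how $\Delta_\gamma u+u=0$ is reached. The paper avoids spectral measures and uses $\|\Delta_\gamma u+u\|_{L^2}^2=\|\Delta_\gamma u\|_{L^2}^2+\|u\|_{L^2}^2-2\|\nabla_\gamma u\|_{L^2}^2=\|u\|_{L^2}^2-\|\nabla_\gamma u\|_{L^2}^2\le 0$.

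Your caveat that this step needs $\Sigma=\mathbb{H}^3$ (the open Milne case) rather than an arbitrary noncompact hyperbolic quotient is well taken. In fact it already applies to \eqref{eigenvalue} itself: for example, constants are in $L^2$ on a finite-volume cusped quotient.
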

\begin{proof}
After integration by parts, we have
\begin{align*}
    \int _{\Sigma }\left | \nabla _{\gamma }u  \right |^{2}\mathrm{d}\mu _{\gamma } 
=- \int _{\Sigma }u\Delta _{\gamma }u \mathrm{d}\mu _{\gamma } .
\end{align*}
The H\"{o}lder inequality leads to
\begin{align*}
    \int _{\Sigma }\left | \nabla _{\gamma }u  \right |^{2}\mathrm{d}\mu _{\gamma } 
\le \left ( \int _{\Sigma }u^{2}  \mathrm{d}\mu _{\gamma }  \right )^{\frac{1}{2} }  
\left (  \int _{\Sigma }\left | \Delta _{\gamma }u \right |^{2}   \mathrm{d}\mu _{\gamma } \right ) ^{\frac{1}{2} } .
\end{align*}
Then the inequality follows directly from \eqref{eigenvalue-ineq}.

  In addition, if the equality holds, we then have
  \begin{align*}
  	\int_{\Sigma }|\Delta_{\gamma }  u+u|^{2}
  	=& \int_{\Sigma } |\Delta_{\gamma }  u|^{2}+\int_{\Sigma }|u|^{2}-2\int_{\Sigma }|\nabla _{\gamma }u |^{2}\\
  	=&\int_{\Sigma }|u|^{2}-\int_{\Sigma }|\nabla _{\gamma }u |^{2}.
  \end{align*}
  By virtue of \eqref{eigenvalue-ineq}, we have 
  \begin{align*}
  	\int_{\Sigma }|\Delta_{\gamma }  u +u|^{2}\le0.
  \end{align*}
  This means that $\Delta_{\gamma }  u +u=0$ holds almost everywhere. In the non-compact $3$-dimensional hyperbolic space, there is no nonzero $u \in L^2$ satisfying $\Delta _{\gamma }u + u =0$. Therefore, the equality holds if and only if $u \equiv 0$.  
    
\end{proof}

\section{Global existence}\label{sec-global}
Our main goal in this section is to prove Theorem \ref{main-thm-1}.

\subsection{Energy for the rescaled equation}
In this section, we introduce a newly modified energy, which is crucial for our proof. The definition follows from the wave-type energy \cite{A-M-11-cmc,wang2021nonlinear}, with a slight modification.

For any $0\le l_{1},l_{2}\in \mathbb{Z} $, $l_{1} +  l_{2} =l$,
we have the natural energy associated to the rescaled equation \eqref{wave-rescale},
\begin{align}\label{nature-energy}
    E_{(l+1)} [\phi ](t):=\frac{(-1)^{l} }{2}& \int _{M}\left( \partial _{\tau }\phi \Delta _{\gamma }^{l_{1} }\Delta _{k }^{l_{2} }\partial _{\tau }\phi 
-\Delta _{\gamma } \phi \Delta _{\gamma }^{l_{1} }\Delta _{k }^{l_{2} }\phi \right.  \nonumber\\
& - \left. t^{2} \Delta _{k } \phi \Delta _{\gamma }^{l_{1} }\Delta _{k }^{l_{2} }\phi \right)\mathrm{d}   \mu _{g} .
\end{align}

To derive a proper energy identity, we consider the following modified energy, 
\be\label{def-m-Energy}
    \widetilde{E} _{(l+1)}[\phi ](t):= 
E_{(l+1)} [\phi ](t) +  
(-1)^{l }\int _{M} \left(\frac{3}{2} \partial_\tau  \phi +\frac{3}{4}\phi \right) \Delta _{\gamma }^{l_{1} }\Delta _{k }^{l_{2} }\phi \,\mathrm{d}   \mu _{g}.
\ee
The choice of $\frac{3}{2}$ in \eqref{def-m-Energy} coincides with the modified energy for the massive mode in \cite{wang2021nonlinear}. 
The coefficient $\frac{3}{4}$ is adopted so that together with \eqref{eigenvalue-ineq}, we have, for instance\footnote{The notation $X \lesssim Y$ means $X \leq cY$ for a universal constant $c$, and the notation $X \sim Y$ means $X \lesssim Y$ and $Y \lesssim X$.},
\begin{align*}
&\int _{M}\left( \left|\partial _{\tau } \phi \right|^{2}+ \left |\nabla _{\gamma } \phi\right |^{2}
+ \left|\phi \right|^{2} \right)\mathrm{d}\mu _{g}  \\
 \lesssim & \int _{M} \left( \left|\partial _{\tau } \phi \right|^{2} 
+\left|\nabla _{\gamma } \phi \right|^{2}  +3\phi \partial _{\tau }  \phi +\frac{3}{2} \left|\phi \right|^{2} \right)\mathrm{d}\mu _{g}, 
\end{align*}
which enables us to establish the positive definiteness of the modified energy for all modes.
This will be shown in Proposition 3.4.

The following proposition shows that the modified energy we define is positive definite.
\begin{proposition}\label{modified-energy-positive}
   Let $\phi$ be a solution to \eqref{wave-rescale}. Then for any
   $0\le l_{1},l_{2}\in \mathbb{Z} $, $l_{1} +  l_{2} =l$, we have
   \begin{align}\label{positive of m-energy}
       & \widetilde{E} _{(l+1)}[\phi ](t)\gtrsim \int _{M} 
\left(\left|\nabla _{\gamma }^{\mathring{l }_{1}  }\Delta _{\gamma }^{\left[\frac{l_{1} }{2} \right]}\nabla _{k }^{\mathring{l}_{2}  }\Delta _{k}^{\left[\frac{l_{2} }{2} \right]} 
\partial_{\tau}  \phi \right|^{2}   
+
\left|\nabla _{\gamma }^{\mathring{l }_{1}  }\Delta _{\gamma }^{\left[\frac{l_{1} }{2} \right]}\nabla _{k }^{\mathring{l}_{2}  }\Delta _{k}^{\left[\frac{l_{2} }{2} \right]} \phi \right|^{2}\right) \mathrm{d}\mu _{g} \nonumber \\ 
& +\int _{M}\left(\left|\nabla _{\gamma }^{\mathring{(1+l_{1}) } }\Delta _{\gamma } ^{\left[\frac{1+l_{1}}{2}\right] } \nabla _{k}^{\mathring{l }_{2} }\Delta _{k} ^{\left[\frac{l_{2}}{2}\right] }\phi \right|^{2}
+t^{2}\left |\nabla _{\gamma }^{\mathring{l }_{1} }\Delta _{\gamma } ^{\left[\frac{l_{1}}{2}\right] } \nabla _{k}^{\mathring{(1+l_{2}) } }\Delta _{k} ^{\left[\frac{1+l_{2}}{2}\right] }\phi\right |^{2}\right)
\mathrm{d}\mu _{g}   .
   \end{align}
Here the notations $\mathring{l_i}$, $\mathring{(1+l_i)}$, $i=1,2$, mean
\begin{align*}
	\mathring{l_i}+2\left [ \frac{l_i}{2}  \right ] =l_i ,  ~~\mathring{1+l_i}+2\left [ \frac{1+l_i}{2}  \right ] =1+l_i.
\end{align*}

   What's more,
   \begin{equation}\label{equivalent norm of m-energy}
        \widetilde{E} _{l+1}[\phi ](t)\sim \left \| \phi  \right \| _{H^{l}(\Sigma \times K) }^{2} +\left \| \mathbb{D} \phi  \right \| _{H^{l}(\Sigma \times K) }^{2},
   \end{equation}
   where $\widetilde{E} _{l+1}[\phi ]:= \sum_{i=0}^{l}\widetilde{E}_{(i+1)} [\phi ]  $.
\end{proposition}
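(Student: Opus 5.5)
The plan is to integrate by parts in each term of $\widetilde{E}_{(l+1)}[\phi]$. The aim is to rewrite the energy as a sum of squares of the quantity $\psi:=\nabla_\gamma^{\mathring{l}_1}\Delta_\gamma^{[l_1/2]}\nabla_k^{\mathring{l}_2}\Delta_k^{[l_2/2]}\phi$ and its derivatives, and then to absorb the cross term with the spectral gap on $\Sigma$.

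\textbf{Step 1: reduction to squares of $\psi$.} By Lemma \ref{lem-com-1}, Lemma \ref{lem-com-2} and Lemma \ref{lem-com-3}, the operators $\Delta_\gamma$, $\Delta_k$, $\nabla_k$ and $\partial_\tau$ commute. I split $\Delta_\gamma^{l_1}\Delta_k^{l_2}$ symmetrically, putting half of the powers on each factor and moving one $\nabla$ across when $l_i$ is odd. This uses the integration by parts $\int u\,\Delta v=-\int\nabla u\cdot\nabla v$ on $\Sigma$ (compact support or decay) and on the closed manifold $K$. The sign $(-1)^l$ then cancels exactly the signs produced, and we get
\begin{align*}
\widetilde{E}_{(l+1)}[\phi]=\int_M\Big(\tfrac12|\partial_\tau\psi|^2+\tfrac12|\nabla_\gamma\psi|^2+\tfrac{t^2}{2}|\nabla_k\psi|^2+\tfrac32\psi\,\partial_\tau\psi+\tfrac34|\psi|^2\Big)\mathrm{d}\mu_g.
\end{align*}
Here $|\nabla_\gamma\psi|^2$ must be identified with $|\nabla_\gamma^{\mathring{(1+l_1)}}\Delta_\gamma^{[(1+l_1)/2]}\cdots\phi|^2$ after one further integration by parts when $l_1$ is odd. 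The case $\psi=\nabla_\gamma\Delta_\gamma^m\cdots\phi$ gives $\int|\nabla_\gamma\psi|^2\ge\int|\Delta_\gamma^{m+1}\cdots\phi|^2$ modulo curvature terms from commuting $\nabla_\gamma$ with $\Delta_\gamma$. These curvature terms are lower order and have a good sign or are controlled, since $\gamma$ has constant curvature $-1$ (Bochner: $\int|\nabla^2 f|^2=\int|\Delta f|^2+2\int|\nabla f|^2$ in dimension 3). The same works for the $t^2|\nabla_k\psi|^2$ term on $K$, using Ricci-flatness, so the Bochner identity gives equality. Tensor-valued $\psi$ is handled componentwise in the product frame.

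\textbf{Step 2: positivity of the quadratic form (main obstacle).} I need to control the quadratic form pointwise in $\tau$. For $\epsilon>0$ small I write
\[
\tfrac12 a^2+\tfrac32 ab+\tfrac34 b^2+\tfrac12 c^2,\qquad a=\partial_\tau\psi,\ b=\psi,\ c=|\nabla_\gamma\psi|,
\]
and use \eqref{eigenvalue-ineq}, applied componentwise or via Lemma \ref{lem-Kato} to the scalar $|\psi|$, which gives $\int c^2\ge\int b^2$. Splitting $\tfrac12c^2=\tfrac{\delta}{2}c^2+\tfrac{1-\delta}{2}c^2$ yields the form
\[
\tfrac12a^2+\tfrac32ab+\big(\tfrac34+\tfrac{1-\delta}{2}\big)b^2 .
\]
Its discriminant is $\tfrac94-2\big(\tfrac54-\tfrac\delta2\big)=-\tfrac14+\delta<0$ for $\delta<1/4$, so it is positive definite. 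This gives the lower bound by $a^2+b^2+\delta c^2+t^2|\nabla_k\psi|^2$, which is \eqref{positive of m-energy}. I expect the difficulty to lie in the bookkeeping of Step 1 (odd orders and curvature commutators) rather than in this algebra.

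\textbf{Step 3: equivalence \eqref{equivalent norm of m-energy}.} The upper bound is immediate from Cauchy–Schwarz. For the lower bound, I sum over $i\le l$ and all splittings $l_1+l_2=i$. Lemma \ref{lem-elliptic} on $\Sigma$, together with the analogous elliptic estimate on compact $K$ (the $L^2$ term of $\phi$ being supplied by the $|\psi|^2$ piece at lower order), converts $\|\Delta_\gamma^{m}\Delta_k^{m'}\cdot\|$ and $\nabla$-versions into full mixed $H^l$ norms. Recall $\mathbb{D}=\{\partial_\tau,\nabla_\gamma,t\nabla_k\}$. The $t\nabla_k$ components appear with weight $t^2$ in the energy, and the remaining pieces match $\|\phi\|_{H^l}+\|\mathbb{D}\phi\|_{H^l}$, using $t\ge t_0$ to bound unweighted $\nabla_k$ terms where needed.
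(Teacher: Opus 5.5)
\textbf{There is a genuine gap in the case of odd $l_1$, which already includes the base case $l=l_1=1$.}

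Take $\psi=\nabla_\gamma\phi$. Integrating by parts, the energy is
\[
\widetilde{E}_{(2)}[\phi]=\int_M\Big(\tfrac12|\partial_\tau\psi|^2+\tfrac12|\Delta_\gamma\phi|^2+\tfrac{t^2}{2}|\nabla_k\psi|^2+\tfrac32\,\psi\cdot\partial_\tau\psi+\tfrac34|\psi|^2\Big)\mathrm{d}\mu_g .
\]
It contains $\frac12|\Delta_\gamma\phi|^2$, not $\frac12|\nabla_\gamma\psi|^2$. The Bochner identity you quote gives $\frac12\int|\nabla_\gamma\psi|^2=\frac12\int|\Delta_\gamma\phi|^2+\int|\psi|^2$. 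So your Step 1 formula overstates the energy by exactly $\int|\psi|^2$.

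This curvature term is neither lower order nor of good sign. It sits at the same level as $\frac34|\psi|^2$ and $\frac32\psi\cdot\partial_\tau\psi$, which is exactly where definiteness is decided. The sign you call good is good for $\int|\nabla_\gamma^2 f|^2\ge\int|\Delta_\gamma f|^2$, but that is the wrong direction for a lower bound on an energy containing $|\Delta_\gamma f|^2$.

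Once the term is restored, Step 2 fails. Your only input is $\int|\nabla_\gamma\psi|^2\ge\int|\psi|^2$ (Kato plus \eqref{eigenvalue-ineq}). Combined with Bochner, this gives only $\int|\Delta_\gamma\phi|^2\ge-\int|\psi|^2$, which is vacuous. The $b^2$-coefficient then stays at $\frac34$, below the threshold $\frac98$, so $\frac12a^2+\frac32ab+\frac34b^2$ is indefinite. The same happens for every odd $l_1$, with $\phi$ replaced by $\Delta_\gamma^{[l_1/2]}\nabla_k^{\mathring{l}_2}\Delta_k^{[l_2/2]}\phi$.

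\textbf{The missing ingredient is the inequality \eqref{laplace and derivative estm}:} $\int_\Sigma|\Delta_\gamma u|^2\ge\int_\Sigma|\nabla_\gamma u|^2$. It follows from
\[
\|\nabla_\gamma u\|^2=-\langle u,\Delta_\gamma u\rangle\le\|u\|\,\|\Delta_\gamma u\|\le\|\nabla_\gamma u\|\,\|\Delta_\gamma u\|,
\]
where the last step uses \eqref{eigenvalue-ineq}. For $K$-tensor-valued $u$ it applies componentwise, which is legitimate because $k$ does not depend on $x$. In your language, you would need $\int|\nabla_\gamma\psi|^2\ge 3\int|\psi|^2$ for exact $\psi$, and Kato cannot supply that.

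With this inequality, set $c=|\Delta_\gamma^{[(1+l_1)/2]}\nabla_k^{\mathring{l}_2}\Delta_k^{[l_2/2]}\phi|$ directly, with no Bochner on $\Sigma$. Your $\delta$-splitting and discriminant computation then go through verbatim, since they only use $\int c^2\ge\int b^2$. This is precisely the paper's Case 2.

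The rest of your proposal is sound and matches the paper. That covers the even-$l_1$ cases, the odd-$l_2$ case (Kato on $\Sigma$ for $K$-tensors and Ricci-flat Bochner on $K$, as in the paper's Cases 1 and 3), and Step 3.
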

\begin{proof}
    To prove \eqref{positive of m-energy}, it suffices to consider the cases when $l=0$ and $l=1$.
    
    Case 1: $l=0$. Note that 
    \begin{align*}
        3\left|\phi \partial _{\tau }\phi \right|&\le \frac{3}{2} \left(\frac{13}{20}\left| \partial _{\tau }\phi\right|^{2}
+\frac{20}{13}|\phi|^{2} \right ) \\
&=\frac{39}{40}\left| \partial _{\tau }\phi\right|^{2}+  \frac{30}{13}\left|\phi\right|^{2}.
    \end{align*}
Using \eqref{eigenvalue-ineq}, we have      
\begin{align*}
    \widetilde{E} _{(1)} =\frac{1}{2}& \int _{M} \left(\left|\partial _{\tau } \phi \right|^{2} 
+\left|\nabla _{\gamma } \phi \right|^{2}
+t^{2} \left|\nabla _{k }\phi \right|^{2}  
+3\phi \partial _{\tau }  \phi +\frac{3}{2} \left|\phi \right|^{2} \right)\mathrm{d}\mu _{g}  \\
\ge   \frac{1}{2}&\int _{M}\left(\left|\partial _{\tau } \phi \right|^{2}+\frac{1}{6}\left |\nabla _{\gamma } \phi \right|^{2}
+\frac{5}{6}\left|\phi \right|^{2} +t^{2}\left |\nabla _{k }\phi \right|^{2} \right.\\
&\quad \left.-\frac{39}{40} \left|\partial _{\tau } \phi \right|^{2}-\frac{30}{13} \left|\phi \right|^{2}
+\frac{3}{2} \left|\phi \right|^{2}\right)\mathrm{d}\mu _{g}\\
= \frac{1}{2}&\int _{M}\left(\frac{1}{40} \left|\partial _{\tau } \phi \right|^{2}+\frac{1}{6}\left |\nabla _{\gamma } \phi\right |^{2}
+t^{2}  \left|\nabla _{k}\phi \right|^{2} \right)\mathrm{d}\mu _{g}\\
&\quad+\frac{1}{2}\int _{M} \frac{1}{39} \left|\phi \right|^{2}\mathrm{d}\mu _{g}.
\end{align*}

Case 2: $l=l_{1}=1$. The proof is similar to that of Case 1. By \eqref{laplace and derivative estm} and \eqref{commute laplace derivative}, we have
\begin{align*}
    \widetilde{E} _{(2)} =\frac{1}{2}&\int _{M} \bigg(\left|\nabla _{\gamma } \partial _{\tau } \phi \right|^{2} 
+\left|\Delta  _{\gamma } \phi \right|^{2}
+t^{2}  \Delta  _{k }\phi  \Delta _{\gamma }  \phi  \\
&\quad +3\nabla _{\gamma } \phi \nabla _{\gamma } \partial _{\tau }  \phi 
+\frac{3}{2} \left|\nabla _{\gamma } \phi\right |^{2}\bigg )\mathrm{d}\mu _{g}\\
\ge \frac{1}{2}&\int _{M}\bigg(\left|\nabla _{\gamma } \partial _{\tau } \phi \right|^{2} 
+ \frac{1}{5}\left|\Delta  _{\gamma } \phi\right |^{2}+ \frac{4}{5}\left|\nabla _{\gamma } \phi\right |^{2} 
+t^{2}  \left|\nabla _{\gamma }\nabla _{k } \phi \right|^{2} \\
&\quad+3\nabla _{\gamma } \phi \nabla _{\gamma } \partial _{\tau }  \phi 
+\frac{3}{2} \left|\nabla _{\gamma } \phi\right |^{2} \bigg)\mathrm{d}\mu _{g}\\
=\int _{M}& \bigg(\frac{1}{2} \left|\nabla _{\gamma } \partial _{\tau } \phi \right|^{2} +\frac{3}{2}\nabla _{\gamma } \phi \nabla _{\gamma } \partial _{\tau }  \phi +\frac{23}{20}\left|\nabla _{\gamma } \phi \right |^{2} \bigg)\mathrm{d}\mu _{g}\\
&\quad+\frac{1}{2}\int _{M}\bigg (
 \frac{1}{5}|\Delta  _{\gamma } \phi |^{2}
+t^{2}  |\nabla _{\gamma }\nabla _{k } \phi |^{2}
\bigg)\mathrm{d}\mu _{g}.
\end{align*}

Note that the matrice $\begin{pmatrix}
 \frac{1}{2}  & \frac{3}{4} \\
 \frac{3}{4}  &\frac{23}{20} 
\end{pmatrix}$
 is positive definite. By the congruence of quadratic forms, it implies 
\begin{align*}
    \widetilde{E} _{(2)} \gtrsim \int _{M}& \bigg(|\nabla _{\gamma } \partial _{\tau } \phi |^{2} +|\nabla _{\gamma } \phi |^{2} +
 (
 |\Delta  _{\gamma } \phi |^{2}
+t^{2}  |\nabla _{\gamma }\nabla _{k } \phi |^{2}
\bigg)\mathrm{d}\mu _{g}.
\end{align*}

Case 3: $l=l_{2}=1$. By \eqref{eigenvalue-ineq} and \eqref{commute laplace derivative}, 
\begin{align*}
    \widetilde{E} _{(2)}=\frac{1}{2} &\int _{M} \bigg(|\nabla _{k}  \partial _{\tau }\phi |^{2}
+\Delta _{k } \phi \Delta _{\gamma } \phi 
+t^{2}|\Delta _{k }\phi |^{2}\\
&\quad+3 \nabla _{k }\partial _{\tau }\phi \nabla _{k}\phi 
+\frac{3}{2} | \nabla _{k }\phi|^{2}\bigg )\mathrm{d}\mu _{g}\\
\ge \frac{1}{2} &\int _{M} \bigg(|\nabla _{k }  \partial _{\tau }\phi |^{2}
+\frac{1}{5} | \nabla  _{\gamma } \nabla  _{k}\phi|^{2}  
+\frac{4}{5} |\nabla  _{k}\phi|^{2} + t^{2}|\Delta _{k }\phi |^{2}\\
&\quad+3 \nabla _{k }\partial _{\tau }\phi \nabla _{k}\phi 
+\frac{3}{2} | \nabla _{k }\phi|^{2} 
\bigg)\mathrm{d}\mu _{g} \\
= \int _{M}& \bigg(\frac{1}{2} |\nabla _{k }  \partial _{\tau }\phi |^{2} +
\frac{3}{2} \nabla _{k }\partial _{\tau }\phi \nabla _{k}\phi +
\frac{23}{20}| \nabla _{k }\phi|^{2} \bigg)\mathrm{d}\mu _{g}\\
&\quad+\frac{1}{2} \int _{M}\bigg(\frac{1}{5} | \nabla  _{\gamma } \nabla  _{k}\phi|^{2}  
+t^{2}|\Delta _{k }\phi |^{2}\bigg)\mathrm{d}\mu _{g} .
\end{align*}
In the second inequality above, we have employed the Kato-type inequality, which follows from Lemma \ref{lem-Kato}, \[|\nabla_{\gamma}\nabla_{k} \phi| \geq |\nabla_{\gamma} |\nabla_{k} \phi||. \] 
This together with \eqref{eigenvalue-ineq} gives rise to \[ \int _{\Sigma } |\nabla _{\gamma } \nabla_k\phi |^{2} \mathrm{d} \mu _{\gamma } \ge  \int _{\Sigma } |\nabla _{\gamma } |\nabla_k\phi| |^{2} \mathrm{d} \mu _{\gamma } \ge \int _{\Sigma } | \nabla_k \phi |^{2} \mathrm{d} \mu _{\gamma }. \]

Noticing that the matrice $\begin{pmatrix}
 \frac{1}{2}  & \frac{3}{4} \\
 \frac{3}{4}  &\frac{23}{20} 
\end{pmatrix}$
 is positive definite, we have
 \begin{align*}
     \widetilde{E} _{(2)} \gtrsim \int _{M}& \left(|\nabla _{k } \partial _{\tau } \phi |^{2} +|\nabla _{k} \phi |^{2} +
 t^{2}|\Delta  _{k} \phi |^{2}
+ |\nabla _{\gamma }\nabla _{k } \phi |^{2}\right)\mathrm{d}\mu _{g}.
 \end{align*}

The remaining cases follow in an analogous procedure, and thus \eqref{positive of m-energy} is concluded.

Next we will prove \eqref{equivalent norm of m-energy}. The case $l=0$ is trivial. When $l=1$, 
\begin{align*}
    \widetilde{E} _{2} [\phi ]&=\widetilde{E}_{(1)}  [\phi ]+\widetilde{E}_{(2)}  [\phi ]\\
&\gtrsim \int _{M}\left (|\partial _{\tau } \phi |^{2}+ |\nabla _{\gamma } \phi |^{2}
+t^{2}  |\nabla _{k }\phi |^{2} 
+ |\phi |^{2}\right)\mathrm{d}\mu _{g}\\
&+\int _{M} \left(|\nabla _{\gamma } \partial _{\tau } \phi |^{2}+ |\Delta  _{\gamma } \phi |^{2}
+t^{2}  |\nabla _{\gamma } \nabla _{k }\phi |^{2} 
+ |\nabla _{\gamma } \phi |^{2}\right)\mathrm{d}\mu _{g}\\
&+\int _{M}\left (|\nabla _{k} \partial _{\tau } \phi |^{2}+ |\nabla _{k }\nabla _{\gamma } \phi |^{2}
+t^{2}  | \Delta _{k }\phi |^{2} 
+ |\nabla _{k} \phi |^{2}\right)\mathrm{d}\mu _{g} .
\end{align*}

By the elliptic estimate (Lemma \ref{lem-elliptic}), we have the equivalence
\begin{align*}
    \int _{\Sigma \times K} \left(|\phi |^{2}+|\Delta _{\gamma }\phi |^{2}  \right )\mathrm{d}\mu _{g} 
\sim \int _{K}||\phi ||^{2 }_{H^{2}(\Sigma ) } \mathrm{d}\mu _{k}     
=||\phi ||_{L^{2}(M) }^{2} +||\nabla _{\gamma } \phi ||_{H^{1}(M) }^{2}.
\end{align*}
Meanwhile, we recall the Bochner formula on the $n$-dimensional compact Ricci-flat manifold $(K,k)$. For any $\phi\in H^{2} \left ( M \right ) $, the formula reads, pointwise on $K$ (for each fixed $x\in\Sigma$),
\begin{align*}
    \frac{1}{2}\Delta _{k}\left | \nabla _{k}\phi   \right |^{2} =\left | \nabla _{k}^{2} \phi \right | ^{2} 
+\left \langle \nabla_k\Delta _{k}\phi ,\nabla_k\phi  \right \rangle _{k}  ,
\end{align*}
where we have used the fact that the Ricci curvature on $(K,k)$ vanishes, which also implies the commutation $\nabla _{k} \Delta _{k}\phi = \Delta _{k}\nabla _{k} \phi $. Integrating this identity over the compact manifold $(K,k)$ yields
\begin{align*}
    \int _{K} \left | \nabla _{k}^{2} \phi  \right | ^{2} \mathrm{d} \mu _{k} 
=\int _{K} \left | \Delta _{k}\phi    \right | ^{2} \mathrm{d} \mu _{k}.
\end{align*}
Consequently,
\begin{align*}
   \int _{\Sigma \times K} \left( t^{2} |\Delta _{k}\phi |^{2}  + t^2 |\nabla_k \phi|^2 \right) \mathrm{d}\mu _{g} 
=t^{2} ||\nabla _{k } \phi ||_{H^{1}(M) }^{2}.
\end{align*}
In summary, we have
\begin{align*}
    \widetilde{E} _{2} [\phi ]\gtrsim \left \| \phi  \right \| _{H^{1}(\Sigma \times K) }^{2} +\left \| \mathbb{D} \phi  \right \| _{H^{1}(\Sigma \times K) }^{2}.
\end{align*}

The higher order case
\begin{align*}
    \widetilde{E} _{l+1} [\phi ]\gtrsim \left \| \phi  \right \| _{H^{l}(\Sigma \times K) }^{2} +\left \| \mathbb{D} \phi  \right \| _{H^{l}(\Sigma \times K) }^{2},
\end{align*} 
 follows in the same manner.
On the other hand, it is obvious to obtain
\begin{align*}
    \widetilde{E} _{l+1} [\phi ]\lesssim \left \| \phi  \right \| _{H^{l}(\Sigma \times K) }^{2} +\left \| \mathbb{D} \phi  \right \| _{H^{l}(\Sigma \times K) }^{2}.
\end{align*}
We conclude \eqref{equivalent norm of m-energy}.
\end{proof}

\subsection{Energy inequality}
Making use of the modified energy \eqref{def-m-Energy}, we will derive an energy identity for our main equation \eqref{wave-rescale}.
\begin{proposition}{\bf (Energy identity)}\label{energy iden-modif-ener}
    Let $\phi$ be a solution to \eqref{wave-rescale}. Then for any $0\le l_{1},l_{2}\in \mathbb{Z} $, $l_{1} +  l_{2}  =l$,
    we have the following energy identity
    \begin{align}\label{prop3.3}
       & \partial _{\tau } \widetilde{E}_{(l+1)}[\phi ]+  \widetilde{E}_{(l+1)}[\phi ]
+(-1)^{l+1} \int _{M} \left(\Delta _{\gamma } \phi+\frac{3}{4}\phi \right)\Delta _{\gamma }^{l_{1} } \Delta _{k }^{l_{2} }\phi 
\mathrm{d}\mu _{g} \nonumber \\
={}&(-1)^{l} \int _{M}\left(\partial _{\tau } \phi +\frac{3}{2}\phi \right)  \Delta _{\gamma }^{l_{1} } \Delta _{k }^{l_{2} } F\mathrm{d}\mu _{g}.
    \end{align}
\end{proposition}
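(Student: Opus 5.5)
The plan is a direct computation: differentiate $\widetilde{E}_{(l+1)}[\phi]$ in $\tau$, use \eqref{wave-rescale} to eliminate $\partial_\tau^2\phi$, and sort the resulting terms. Write $P:=\Delta_\gamma^{l_1}\Delta_k^{l_2}$. Two facts will be used throughout. First, $P$ is formally self-adjoint on $L^2(M,\mathrm{d}\mu_g)$ and commutes with $\partial_\tau$, $\Delta_\gamma$ and $\Delta_k$; this follows from Lemmas \ref{lem-com-1} and \ref{lem-com-3} together with integration by parts, assuming enough decay at infinity on $\Sigma$, and $K$ is compact. Second, $\mathrm{d}\mu_g$ is $\tau$-independent and $\partial_\tau t^2=2t^2$. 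Hence for any pair of quadratic expressions, $\partial_\tau\int_M u\,Pu=2\int_M \partial_\tau u\,Pu$.

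\emph{Step 1: the natural energy.} Differentiating \eqref{nature-energy} gives
\[
(-1)^l\partial_\tau E_{(l+1)}=\int_M \partial_\tau\phi\,P\bigl(\partial_\tau^2\phi-\Delta_\gamma\phi-t^2\Delta_k\phi\bigr)\,\mathrm{d}\mu_g-\int_M t^2\Delta_k\phi\,P\phi\,\mathrm{d}\mu_g .
\]
The last term comes from differentiating the weight $t^2$. By \eqref{wave-rescale}, the bracket equals $F-2\partial_\tau\phi$.

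\emph{Step 2: the correction term.} Differentiating the correction in \eqref{def-m-Energy} gives
\[
\int_M\Bigl(\tfrac32\partial_\tau^2\phi\,P\phi+\tfrac32\partial_\tau\phi\,P\partial_\tau\phi+\tfrac32\partial_\tau\phi\,P\phi\Bigr)\,\mathrm{d}\mu_g,
\]
up to the overall factor $(-1)^l$. I will substitute $\partial_\tau^2\phi=F-2\partial_\tau\phi+\Delta_\gamma\phi+t^2\Delta_k\phi$ into the first term.

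\emph{Step 3: collect terms.} Adding Steps 1 and 2 and multiplying by $(-1)^l$, the terms group as follows:
\begin{itemize}
\item the $F$-terms give $\int(\partial_\tau\phi+\frac32\phi)PF$;
\item the $\partial_\tau\phi\,P\partial_\tau\phi$ terms give coefficient $-2+\frac32=-\frac12$;
\item the $\partial_\tau\phi\,P\phi$ terms give $-3+\frac32=-\frac32$;
\item the $\Delta_\gamma\phi\,P\phi$ term has coefficient $\frac32$;
\item the $t^2\Delta_k\phi\,P\phi$ terms give $-1+\frac32=\frac12$.
\end{itemize}
Now subtract $\widetilde{E}_{(l+1)}$, written out, and the term $\int(\Delta_\gamma\phi+\frac34\phi)P\phi$. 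The $\partial_\tau\phi P\partial_\tau\phi$, $\partial_\tau\phi P\phi$ and $t^2\Delta_k\phi P\phi$ pieces then cancel exactly. The $\Delta_\gamma\phi P\phi$ pieces also cancel, since $\frac32-\frac12-1=0$. What remains of $\phi P\phi$ is $\frac34-\frac34=0$. This yields \eqref{prop3.3}.

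The only delicate point is bookkeeping rather than a genuine obstacle. One must track the sign $(-1)^l$ and the factor coming from $\partial_\tau(t^2)$. One must also justify moving $P$ across in the symmetric terms, namely $\int\Delta_\gamma\partial_\tau\phi\,P\phi=\int\Delta_\gamma\phi\,P\partial_\tau\phi$ and the analogous $\Delta_k$ identity. This uses Lemmas \ref{lem-com-1}--\ref{lem-com-3} and integration by parts on the product $\Sigma\times K$, assuming sufficient decay or a density argument from compactly supported data.
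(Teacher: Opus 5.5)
Your proposal is correct and is essentially the paper's proof: differentiate $\widetilde{E}_{(l+1)}$ using Lemmas \ref{lem-com-1} and \ref{lem-com-3}, together with the self-adjointness of $\Delta_\gamma^{l_1}\Delta_k^{l_2}$, which the paper uses tacitly. Then substitute \eqref{wave-rescale} for $\partial_\tau^2\phi$ and regroup, which yields exactly the coefficients $-\frac12,-\frac32,\frac32,\frac12$ of the paper's penultimate line. One wording slip: in the last step you are \emph{adding} $(-1)^l\widetilde{E}_{(l+1)}$ and subtracting $\int_M(\Delta_\gamma\phi+\frac34\phi)\Delta_\gamma^{l_1}\Delta_k^{l_2}\phi\,\mathrm{d}\mu_g$, which is what your cancellation checks such as $\frac32-\frac12-1=0$ actually do, not subtracting $\widetilde{E}_{(l+1)}$.
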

\begin{proof}
For notational convenience, we abbreviate $\int_M \mathrm{d}\mu_g$ as $\int$. With the help of Lemmas \ref{lem-com-1} and \ref{lem-com-3}, we  calculate straightforwardly 
    \begin{align*}
        \partial _{\tau } \widetilde{E}_{(l+1)} [\phi ] 
=&(-1)^{l} \int  \left(\partial _{\tau }^{2}\phi \Delta _{\gamma }^{l_{1} } \Delta _{k }^{l_{2} }\partial _{\tau }\phi 
-\Delta _{\gamma }\phi \Delta _{\gamma }^{l_{1} } \Delta _{k }^{l_{2} }\partial _{\tau }\phi \right.\\
 &\quad \left.-t^{2}\Delta _{k}\phi\Delta _{\gamma }^{l_{1} } \Delta _{k }^{l_{2} }\phi 
-t^{2}\Delta _{k}\phi\Delta _{\gamma }^{l_{1} } \Delta _{k }^{l_{2} }\partial _{\tau }\phi \right)\\
&\quad+(-1)^{l} \int\frac{3}{2}\left(\partial _{\tau }^{2}\phi +\partial _{\tau }\phi \right)\Delta _{\gamma }^{l_{1} } \Delta _{k }^{l_{2} }\phi +
 (-1)^{l} \int\frac{3}{2}\partial _{\tau }\phi\Delta _{\gamma }^{l_{1} } \Delta _{k }^{l_{2} }\partial _{\tau }\phi \\
 =&(-1)^{l} \int  \left(F-2\partial _{\tau }\phi \right)\Delta _{\gamma }^{l_{1} } \Delta _{k }^{l_{2} }  \partial_{\tau} \phi 
-(-1)^{l} \int t^{2} \Delta _{k }\phi \Delta _{\gamma }^{l_{1} } \Delta _{k }^{l_{2} }\phi \\
&\quad+(-1)^{l} \int\frac{3}{2}\left(F +\Delta _{\gamma } \phi +t^{2} \Delta _{k }\phi -2\partial _{\tau }\phi+\partial _{\tau }\phi\right)\Delta _{\gamma }^{l_{1} } \Delta _{k }^{l_{2} }\phi \\
&\quad+(-1)^{l} \int\frac{3}{2}\partial _{\tau }\phi \Delta _{\gamma }^{l_{1} } \Delta _{k }^{l_{2} }\partial _{\tau }\phi\\
=&(-1)^{l} \int\left(\partial _{\tau }\phi+\frac{3}{2}\phi \right)\Delta _{\gamma }^{l_{1} } \Delta _{k }^{l_{2} }F
-(-1)^{l} \int\frac{1}{2}\partial _{\tau }\phi \Delta _{\gamma }^{l_{1} } \Delta _{k }^{l_{2} }\partial _{\tau }\phi\\
&\quad+(-1)^{l} \int\left(\frac{1}{2}t^{2} \Delta _{k }\phi+\frac{3}{2}\Delta _{\gamma }\phi -\frac{3}{2}\partial _{\tau }\phi \right)
\Delta _{\gamma }^{l_{1} } \Delta _{k }^{l_{2} }\phi\\
=&(-1)^{l} \int\left(\partial _{\tau }\phi+\frac{3}{2}\phi \right )\Delta _{\gamma }^{l_{1} } \Delta _{k }^{l_{2} }F
-\widetilde{E}_{(l+1)}[\phi ]\\
&\quad+ (-1)^{l} \int \left(\Delta _{\gamma }\phi+\frac{3}{4}\phi   \right)
 \Delta _{\gamma }^{l_{1} } \Delta _{k }^{l_{2} }\phi.
    \end{align*}
    This completes the proof.
\end{proof}

In the next proposition, we show that the sign of the extra quadratic terms in the energy identity \eqref{prop3.3} is favorable.
\begin{proposition}
    Let $\phi$ be a solution to \eqref{wave-rescale}. Then for any $0\le l,l_{1},l_{2}\in \mathbb{Z} $, with $l_{1} +  l_{2}=l$, we have
    \begin{align}\label{prop3.4}
       (-1)^{l+1} \int _{M}\left( \Delta _{\gamma } \phi+\frac{3}{4}\phi \right) \Delta _{\gamma }^{l_{1} } \Delta _{k }^{l_{2} } \phi  \mathrm{d}\mu _{g}\ge 0. 
    \end{align}
\end{proposition}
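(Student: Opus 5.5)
The plan is to integrate by parts so that the integrand becomes a manifestly signed quadratic form in suitable derivatives of $\phi$. The two ingredients are the self-adjointness and nonpositivity of $\Delta_\gamma$ and $\Delta_k$, and the spectral bound \eqref{eigenvalue-ineq} in the form $\int_\Sigma(-\Delta_\gamma\psi)\psi\ge\int_\Sigma\psi^2$. We treat the parities of $l_1$ and $l_2$ separately. Throughout we use Lemmas \ref{lem-com-1} and \ref{lem-com-2} to move Laplacians freely between the two factors, and we assume enough decay or compact support that boundary terms vanish.

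\emph{Case $l_1=2a$, $l_2=2b$.} Set $\psi=\Delta_\gamma^a\Delta_k^b\phi$. Since $\Delta_\gamma$ and $\Delta_k$ are symmetric and commute, the integral equals
\[
(-1)^{l+1}\int_M\Big(\Delta_\gamma\psi+\tfrac34\psi\Big)\psi\,\mathrm{d}\mu_g .
\]
Here $(-1)^{l+1}=-1$, so the quantity is $\int_M|\nabla_\gamma\psi|^2-\tfrac34\int_M\psi^2$. This is nonnegative by \eqref{eigenvalue-ineq}, applied on each slice $\Sigma\times\{y\}$ and then integrated over $K$.

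\emph{General case.} If $l_1$ is odd, move one factor of $\nabla_\gamma$ across, writing $\int u\,\Delta_\gamma v=-\int\langle\nabla_\gamma u,\nabla_\gamma v\rangle$. If $l_2$ is odd, do the same with one $\nabla_k$. Let $\Psi$ denote the resulting tensor $\nabla_\gamma^{\mathring{l}_1}\Delta_\gamma^{[l_1/2]}\nabla_k^{\mathring{l}_2}\Delta_k^{[l_2/2]}\phi$. Each such transfer produces one minus sign, and this exactly compensates the change in $(-1)^{l+1}$. The quantity therefore becomes
\[
\int_M|\nabla_\gamma\Psi|^2-\tfrac34\int_M|\Psi|^2 ,
\]
where $\nabla_\gamma$ acts on $\Psi$ by the convention of Section \ref{sec-notation}. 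For example, when $l_1=1$ and $l_2=0$ we get $-\int(\Delta_\gamma\phi+\tfrac34\phi)\Delta_\gamma\phi=\int|\Delta_\gamma\phi|^2-\tfrac34\int|\nabla_\gamma\phi|^2$, which is nonnegative by \eqref{laplace and derivative estm}.

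\emph{Main obstacle.} The difficulty is applying the spectral gap to the tensor $\Psi$ rather than to a scalar. There are two subcases.

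\begin{itemize}
\item When $\Psi$ carries only $K$-indices, we use the Kato-type inequality (Lemma \ref{lem-Kato}) in the form $|\nabla_\gamma\Psi|\ge|\nabla_\gamma|\Psi||$. Then \eqref{eigenvalue-ineq} applied to the scalar $|\Psi|$ gives $\int|\nabla_\gamma\Psi|^2\ge\int|\Psi|^2$, exactly as in Case 3 of Proposition \ref{modified-energy-positive}.
\item When $\Psi$ carries a $\Sigma$-index, i.e.\ $\mathring{l}_1=1$, we do not need a gap for one-forms on $\Sigma$. Instead we rewrite $\int|\nabla_\gamma\Psi|^2=\int|\Delta_\gamma\Psi'|^2$ and $\int|\Psi|^2=\int|\nabla_\gamma\Psi'|^2$, with $\Psi'=\Delta_\gamma^{[l_1/2]}\nabla_k^{\mathring{l}_2}\Delta_k^{[l_2/2]}\phi$; this identity is obtained by integrating by parts the other way. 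We then apply \eqref{laplace and derivative estm} componentwise in the $K$-indices. For this it is convenient to work in a $k$-orthonormal frame at each fixed $y$, or to use the Kato argument again with $|\Psi'|$.
\end{itemize}

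The only loss in either subcase is the factor $\tfrac34<1$, so both inequalities have room to spare and the proposition follows.
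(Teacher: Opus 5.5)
Your strategy is essentially the paper's. You reduce, via the commutation lemmas and self-adjointness of $\Delta_\gamma$ and $\Delta_k$, to the parity cases, then close with \eqref{eigenvalue-ineq}, \eqref{laplace and derivative estm} and the Kato inequality. You are in fact more explicit than the paper about $l_1,l_2$ both odd, where \eqref{laplace and derivative estm} is needed for $K$-tensor-valued functions; your componentwise or Kato argument supplies this. There is, however, one false step, in your $\Sigma$-index subcase.

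For $\Psi=\nabla_\gamma\Psi'$, the identity $\int|\nabla_\gamma\Psi|^2=\int|\Delta_\gamma\Psi'|^2$ does not hold. The operators $\nabla_\gamma$ and $\Delta_\gamma$ do not commute on $(\Sigma,\gamma)$, whose Ricci tensor is $-2\gamma$, and integrating Bochner's formula componentwise in the $K$-indices gives
\[
\int_M|\nabla_\gamma^2\Psi'|^2\,\mathrm{d}\mu_g=\int_M|\Delta_\gamma\Psi'|^2\,\mathrm{d}\mu_g+2\int_M|\nabla_\gamma\Psi'|^2\,\mathrm{d}\mu_g .
\]
For the same reason, your unified formula $\int|\nabla_\gamma\Psi|^2-\frac34\int|\Psi|^2$ is wrong when $l_1$ is odd. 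The true value of the quantity is $\int|\nabla_\gamma\Psi|^2-\frac{11}{4}\int|\Psi|^2$, so showing the unified formula is nonnegative would not suffice.

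The two errors cancel, and the expression you finally estimate, $\int|\Delta_\gamma\Psi'|^2-\frac34\int|\nabla_\gamma\Psi'|^2$, is the correct one. Derive it directly, as in your own $l_1=1$, $l_2=0$ example, without ever forming $\nabla_\gamma\Psi$:
\begin{itemize}
\item move Laplacians across using self-adjointness;
\item integrate by parts once in $K$ if $l_2$ is odd;
\item integrate by parts in $\Sigma$ only in the $\frac34$-term.
\end{itemize}
With that change the argument is complete.
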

\begin{proof}
By virtue of lemmas \ref{lem-com-1}--\ref{lem-com-2}, it suffices to prove the cases $l=0$ and $l=1$.

    For $l=0$, \eqref{prop3.4} follows directly from \eqref{eigenvalue-ineq}.
    
      For $l=l_{1}=1$, \eqref{prop3.4} is a consequence of \eqref{laplace and derivative estm}.
    
      For $l=l_{2}=1$, using \eqref{eigenvalue-ineq}, we have
    \begin{align*}
        &\int _{M} \Delta _{\gamma } \phi\Delta _{k } \phi \mathrm{d}\mu _{g}  + \frac{3}{4} \int _{M} \phi \Delta _{k } \phi\mathrm{d}\mu _{g}\\
=&\int _{M}\left(|\nabla _{\gamma } \nabla _{k}\phi |^{2}-\frac{3}{4} | \nabla _{k}\phi |^{2}\right) \mathrm{d}\mu _{g} \\
\ge& \int _{M}\left(|\nabla _{k}\phi |^{2}-\frac{3}{4} | \nabla _{k}\phi |^{2}\right) \mathrm{d}\mu _{g}\\
\ge &0 .
    \end{align*}
\end{proof}

Consequently, Propositions 3.3 and 3.4 yield the desired energy inequality, which is well-suited for the subsequent energy estimates.
\begin{corollary}\label{energy ineq}
    Let $\phi$ be a solution to \eqref{wave-rescale}. Then for any $0\le l,l_{1},l_{2}\in \mathbb{Z} $, with $l_{1} +  l_{2}  =l$, 
    \begin{align}\label{coro1}
       & \partial _{\tau } \widetilde{E}_{(l+1)}[\phi ]+  \widetilde{E}_{(l+1)}[\phi ] \nonumber
 \\
&\lesssim(-1)^{l} \int _{M}\left(\partial _{\tau } \phi +\frac{3}{2}\phi \right) \Delta _{\gamma }^{l_{1} } \Delta _{k }^{l_{2} } F\mathrm{d}\mu _{g}. 
    \end{align}
\end{corollary}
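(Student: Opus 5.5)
The corollary follows by combining the energy identity of Proposition \ref{energy iden-modif-ener} with the sign information of Proposition 3.4. The plan is as follows.

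First, fix $l,l_1,l_2$ with $l_1+l_2=l$ and write \eqref{prop3.3} in the form
\begin{align*}
\partial_\tau \widetilde{E}_{(l+1)}[\phi]+\widetilde{E}_{(l+1)}[\phi] ={}& (-1)^{l}\int_M\Big(\partial_\tau\phi+\frac32\phi\Big)\Delta_\gamma^{l_1}\Delta_k^{l_2}F\,\mathrm{d}\mu_g \\
&- (-1)^{l+1}\int_M\Big(\Delta_\gamma\phi+\frac34\phi\Big)\Delta_\gamma^{l_1}\Delta_k^{l_2}\phi\,\mathrm{d}\mu_g .
\end{align*}
By \eqref{prop3.4}, the last integral, including its prefactor $(-1)^{l+1}$, is nonnegative. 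Subtracting it can therefore only decrease the right-hand side, and we obtain
\[
\partial_\tau \widetilde{E}_{(l+1)}[\phi]+\widetilde{E}_{(l+1)}[\phi]\le (-1)^{l}\int_M\Big(\partial_\tau\phi+\frac32\phi\Big)\Delta_\gamma^{l_1}\Delta_k^{l_2}F\,\mathrm{d}\mu_g .
\]
This is \eqref{coro1} with implicit constant equal to $1$.

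The only points needing care are technical. The integrations by parts underlying Propositions \ref{energy iden-modif-ener} and 3.4 require enough regularity and decay of $\phi$ on the non-compact factor $\Sigma$. They can be justified for smooth compactly supported (in $\Sigma$) solutions, which finite speed of propagation supplies for such data, and then extended by density to $H^{N+1}\times H^{N}$ data. The only substantive ingredient is the sign property \eqref{prop3.4}. That property rests on the spectral gap \eqref{eigenvalue-ineq}, on \eqref{laplace and derivative estm}, and on the Kato-type inequality (Lemma \ref{lem-Kato}) for the mixed terms. Since it is already established, no real obstacle remains.
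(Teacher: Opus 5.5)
Your proposal is correct and matches the paper's argument, which likewise obtains the corollary by combining the identity \eqref{prop3.3} with the nonnegativity \eqref{prop3.4} and dropping the nonnegative term; this gives the inequality with implicit constant $1$. Your remarks on justifying the integrations by parts via approximation go slightly beyond what the paper writes but do not change the argument.
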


\subsection{Energy estimates}
In this section, we will prove that the energy decays.

Fix $N \in \mathbb{N}$, $N \geq 3+2\tilde{n}$, $\tilde{n}$ is the smallest integer greater than or equal to $\frac{n}{2} $. We make the bootstrap assumption: Let $A>0$ be a large constant to be determined later. For all $l \leq N$, $t \geq t_1$, we assume
\be\label{bt}
\widetilde{E} _{l} [\phi ](t)\le \varepsilon ^{2} A^{2} t^{-1}.
\ee

We prove the following propagation estimates, which improve  \eqref{bt}.
\begin{theorem}\label{thm-energy-estimates}
Suppose the bootstrap assumption \eqref{bt} holds. Then the energy estimate can be improved as 
\begin{align}
 \widetilde{E} _{l} [\phi ](t)\le \frac{1}{2} \varepsilon ^{2} A^{2} t^{-1} ,\label{energy-estimate}
\end{align}
where $t_1 \leq t$.
\end{theorem}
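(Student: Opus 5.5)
The plan is to integrate the energy inequality of Corollary \ref{energy ineq} in logarithmic time $\tau=\ln t$, after bounding its right-hand side by a cubic quantity through the bootstrap assumption \eqref{bt} and Sobolev embedding. Summing \eqref{coro1} over all $0\le i\le l-1$ and all splittings $l_1+l_2=i$ gives
\begin{align*}
\partial_\tau \widetilde E_l[\phi]+\widetilde E_l[\phi]\lesssim \sum_{i\le l-1}\sum_{l_1+l_2=i}\Bigl|\int_M\Bigl(\partial_\tau\phi+\tfrac32\phi\Bigr)\Delta_\gamma^{l_1}\Delta_k^{l_2}F\,\mathrm{d}\mu_g\Bigr|.
\end{align*}
For odd $i$ I would move one derivative from $\Delta_\gamma^{l_1}\Delta_k^{l_2}F$ onto the first factor by integration by parts, so that at most $l-1$ derivatives fall on $F$. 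Cauchy--Schwarz then bounds the right-hand side by $\widetilde E_l^{1/2}\,\|F\|_{H^{l-1}}$, using \eqref{equivalent norm of m-energy} to control $\|\partial_\tau\phi\|_{H^{l-1}}+\|\phi\|_{H^{l-1}}$.

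The key step is the nonlinear estimate. Since $F=-S(\mathbb D\phi,\mathbb D\phi)$ is a quadratic form in $\mathbb D\phi=\{\partial_\tau\phi,\nabla_\gamma\phi,t\nabla_k\phi\}$, I would distribute derivatives by the Leibniz rule; all relevant derivatives commute with the frame, by Lemmas \ref{lem-com-1}--\ref{lem-com-3}. Each resulting term puts at most $(l-1)/2\le N/2$ derivatives on one factor, which I estimate in $L^\infty(M)$. For that I would use Sobolev embedding on the $(3+n)$-dimensional product $\Sigma\times K$, which has bounded geometry because the hyperbolic $\Sigma$ has uniform injectivity radius and $K$ is compact. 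This needs roughly $2+\tilde n$ extra derivatives, which is where the condition $N\ge 3+2\tilde n$ enters. The result I aim for is
\begin{align*}
\|F\|_{H^{l-1}}\lesssim \|\mathbb D\phi\|_{H^{N}}\,\|\mathbb D\phi\|_{H^{l-1}}\lesssim \widetilde E_N^{1/2}\,\widetilde E_l^{1/2}\lesssim \varepsilon A t^{-1/2}\,\widetilde E_l^{1/2}.
\end{align*}
The bound on $t\nabla_k\phi$ is exactly what the $t^2$-weighted part of the energy provides, and factors of $t$ in front of $\nabla_k$ are harmless because $\mathbb D$ already absorbs them.

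This gives $\partial_\tau\widetilde E_l+\widetilde E_l\le C\varepsilon A t^{-1/2}\widetilde E_l$, or in $t$-variables
\begin{align*}
\partial_\tau\bigl(t\,\widetilde E_l\bigr)\le C\varepsilon A\, t^{-1/2}\,\bigl(t\,\widetilde E_l\bigr).
\end{align*}
Because $\int_{\tau_0}^\infty e^{-\tau/2}\,d\tau=2t_0^{-1/2}<\infty$, Gr\"onwall's inequality yields $t\,\widetilde E_l(t)\le e^{2C\varepsilon A t_0^{-1/2}}\,t_0\,\widetilde E_l(t_0)\le e^{2C\varepsilon A t_0^{-1/2}}C_0\varepsilon^2$. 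Here $C_0$ comes from the data via \eqref{equivalent norm of m-energy}. I would then choose $A^2\ge 4C_0$ and $\varepsilon$ small enough that $e^{2C\varepsilon A t_0^{-1/2}}\le 2$, which gives \eqref{energy-estimate}.

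The main obstacle I expect is the nonlinear estimate in the second step. Two points need care there. First, the Sobolev embedding must be uniform on the noncompact $\Sigma$ and compatible with the anisotropic weight $t$ on $\nabla_k$. If factors of $t^{-1}$ are needed to convert $\nabla_k$ back into $t\nabla_k$, they only help. Second, the top-order term with all $l$ derivatives landing on $F$ in the even case must be handled. There I would first try to absorb one derivative by integration by parts, and otherwise accept an $H^{l}$ norm of $\mathbb D\phi$, which \eqref{equivalent norm of m-energy} still controls by $\widetilde E_{l+1}$. To close at the top order I would then run the argument for $l\le N$ with the paper's indexing, using $\widetilde E_{N+1}$ to supply the needed control.
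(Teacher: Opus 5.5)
Your proposal is correct and follows essentially the paper's argument. Both bound the right-hand side of Corollary \ref{energy ineq} by a cubic quantity via integration by parts, Cauchy--Schwarz and Sobolev embedding, extract $\varepsilon A t^{-1/2}$ from the bootstrap, close with Gr\"onwall using $\partial_\tau\widetilde E+\widetilde E=\partial_t(t\widetilde E)$ and the integrability of $t^{-3/2}$, and then choose $A$ large and $\varepsilon$ small.

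The differences are cosmetic. You spend the bootstrap on the $L^\infty$ factor and close a linear Gr\"onwall at each level, whereas the paper spends it on $\|\partial_\tau\phi\|_{H^l}+\|\phi\|_{H^l}$ and runs Gr\"onwall on $\widetilde E_{N+1}$. Also, as you partly acknowledge at the end, $\|\mathbb D\phi\|_{H^N}$ is controlled by $\widetilde E_{N+1}$, not $\widetilde E_N$; this off-by-one in the bootstrap indexing is shared by the paper.
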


\begin{proof}
Let $I_{N+1} \in \mathbb{R}$ be an upper bound of the initial data of \eqref{wave-eq}, i.e.,
\ben
\|\varphi_0\|_{H^{N+1}(\Sigma \times K)} + \|\varphi_1\|_{H^{N}(\Sigma \times K)} \leq  \varepsilon I_{N+1}.
\een
The subscript $N+1$ in $I_{N+1}$ denotes the number of derivatives used in the energy norms.

To derive a closed energy inequality \eqref{coro1}, we need to estimate the nonlinear terms
 \begin{align*}
     (-1)^{l} \int _{M}\left(\partial _{\tau } \phi +\frac{3}{2}\phi \right) \Delta _{\gamma }^{l_{1} } \Delta _{k }^{l_{2} }F\mathrm{d}\mu _{g}.
\end{align*}
After integrating by parts and making use of the Cauchy-Schwarz inequality, it can be bounded by
\begin{equation*}\label{bd-nonlinear-term}
    \left(\left \| \partial _{\tau }\phi   \right \|_{H^{l}(\Sigma \times K) }+\left \| \phi   \right \|_{H^{l}(\Sigma \times K) } \right)\left \|F  \right \|_{H^{l}(\Sigma \times K) },
\end{equation*}
and $$\left \|F  \right \|_{H^{l} (\Sigma \times K)} \lesssim \left \|\mathbb{D} \phi   \right \|_{H^{N}(\Sigma \times K) }^{2}, $$where $l\leq N $.
The above estimate follows from the fact that: $\tilde{n}$ is the smallest integer larger than or equal to $ \frac{n}{2}$, $\left [ \frac{N+1}{2}  \right ] +2+\tilde{n} \le N+1$ for $l\leq N$, $N\ge3+2\tilde{n}$. Then we can use the Sobolev embedding in $\Sigma  $ and $K$: \[ H^{2}(\Sigma ) \hookrightarrow L^{\infty } (\Sigma ) \quad \text{and} \quad H^{\tilde{n}}(K) \hookrightarrow L^{\infty } (K). \] Notice that by the interpolation inequality,
\begin{align*}
    \left \| \phi  \right \| _{L^{4} } &\lesssim \left \| \phi  \right \| _{L^{2} }^{\frac{1}{2} }\cdot  \left \| \phi  \right \| _{L^{6} }^{\frac{1}{2} }\\
&\lesssim \left \| \phi  \right \| _{L^{2} }+ \left \| \phi  \right \| _{L^{6} }\\
&\lesssim \left \| \phi  \right \| _{H^{1} }.
\end{align*}
Therefore ${H^{1} }(\Sigma )\hookrightarrow {L^{4} } (\Sigma )$.

Noting that $$\partial _{\tau  } \widetilde{E}_{l+1}[\phi ] +\widetilde{E}_{l+1}[\phi ]
=\partial _{t}\left (t \widetilde{E}_{l+1}[\phi ]\right),$$ it follows from \eqref{coro1} that
\begin{align*}
    \partial _{t} \left(t \widetilde{E}_{l+1}[\phi ]\right)&\lesssim 
\left(\left \| \partial _{\tau }\phi   \right \|_{H^{l}(\Sigma \times K) }+\left \| \phi   \right \|_{H^{l}(\Sigma \times K) } \right)\left \|F  \right \|_{H^{l}(\Sigma \times K) }\\
&\lesssim  \left( \left \|\mathbb{D} \phi   \right \|_{H^{N} (\Sigma \times K)} \right )^{2} \left( \left \| \partial _{\tau }\phi   \right \|_{H^{l} (\Sigma \times K)}+\left \| \phi   \right \|_{H^{l}(\Sigma \times K) } \right ).
\end{align*}
Due to the bootstrap assumption: $$\left \| \partial _{\tau }\phi   \right \|_{H^{l} (\Sigma \times K)}+\left \| \phi   \right \|_{H^{l}(\Sigma \times K) } 
\lesssim \varepsilon At^{-\frac{1}{2} }, $$ and the fact $$\left \|\mathbb{D} \phi   \right \|_{H^{N} (\Sigma \times K)}^{2}\lesssim \widetilde{E} _{N+1}[\phi ](t), $$ we can further derive
\begin{equation*}
    \partial _{t} \left(t \widetilde{E}_{l+1}[\phi ]\right)\lesssim 
 \varepsilon At^{-\frac{3}{2} } 
\left(t \widetilde{E} _{N+1}[\phi ]\right) .
\end{equation*}
The Gronwall's inequality leads to
\begin{equation*}
    t \widetilde{E}_{l+1}[\phi ]\le  \varepsilon^2 C_{1}C(I_{N+1})e^{2\varepsilon At_{1}^{-\frac{1}{2} }},
\end{equation*}
where $C_{1}$ is a universal constant. Choosing $A$ large enough (depending on $I_{N+1}$ ) and $\varepsilon$ small enough so that $\varepsilon^2 C_{1}C(I_{N+1})e^{2\varepsilon At_{1}^{-\frac{1}{2} }}\le \frac{1}{2} (\varepsilon A)^{2} $. Then we complete the proof of \eqref{thm-energy-estimates}.

\end{proof}

\subsection{Close the energy arguments}
Let $[t_1, t^\ast]$ be the largest time interval in which the bootstrap assumptions \eqref{bt} hold. We have proved in Theorem \ref{thm-energy-estimates} that \eqref{bt} implies the same inequalities with the constant $\varepsilon^2 A^2$ being replaced by $\frac{1}{2}\varepsilon^2 A^2$ on $[t_1, t^\ast]$.
By the local well-poseness, the solution and the estimates can be extended to a larger time interval, thus contradicting the maximality of $t^\ast$. Therefore, we must have  $t^\ast = + \infty$. Theorem \ref{main-thm-1} is concluded.

\end{document}